\providecommand{\U}[1]{\protect\rule{.1in}{.1in}}
\newtheorem{theorem}{Theorem}[section]
\newtheorem{corollary}[theorem]{Corollary}
\newtheorem{lemma}[theorem]{Lemma}
\newtheorem{proposition}[theorem]{Proposition}
\newtheorem{remark}[theorem]{Remark}
\newenvironment{proof}[1][Proof]{\noindent\textbf{#1.} }{\ \rule{0.5em}{0.5em}}
\begin{document}

\title{On Integer Images of Max-plus Linear Mappings}
\author{Peter Butkovi\v{c}\thanks{E-mail: p.butkovic@bham.ac.uk}\\School of Mathematics, University of Birmingham\\Birmingham B15 2TT, United Kingdom}
\maketitle

\begin{abstract}
Let us extend the pair of operations $\left(  \oplus,\otimes\right)  =\left(
\max,+\right)  $ over real numbers to matrices in the same way as in
conventional linear algebra.

We study integer images of mappings $x\rightarrow A\otimes x$, where
$A\in\mathbb{R}^{m\times n}$ and $x\in\mathbb{R}^{n}.$ The question whether
$A\otimes x$ is an integer vector for at least one $x\in\mathbb{R}^{n}$ has
been studied for some time but polynomial solution methods seem to exist only
in special cases. In the terminology of combinatorial matrix theory this
question reads: is it possible to add constants to the columns of a given
matrix so that all row maxima are integer? This problem has been motivated by
attempts to solve a class of job-scheduling problems.

We present two polynomially solvable special cases aiming to move closer to a
polynomial solution method in the general case.

AMS classification: 15A18, 15A80

Keywords: max-linear mapping; integer image; computational complexity.

\textbf{Dedicated to Professor Karel Zimmermann.}

\end{abstract}

\section{\bigskip Introduction}

Since the 1960s max-algebra provides modelling and solution tools for a class
of problems in discrete mathematics and matrix algebra. The key feature is the
development of an analogue of linear algebra for the pair of operations
$\left(  \oplus,\otimes\right)  $ where%
\[
a\oplus b=\max(a,b)
\]
and
\[
a\otimes b=a+b
\]
for $a,b\in\overline{\mathbb{R}}\overset{def}{=}\mathbb{R}\cup\{-\infty\}. $
This pair is extended to matrices and vectors as in conventional linear
algebra. That is if $A=(a_{ij}),~B=(b_{ij})$ and $C=(c_{ij})$ are matrices of
compatible sizes with entries from $\overline{\mathbb{R}}$, we write
$C=A\oplus B$ if $c_{ij}=a_{ij}\oplus b_{ij}$ for all $i,j$ and $C=A\otimes B$
if
\[
c_{ij}=\bigoplus\limits_{k}a_{ik}\otimes b_{kj}=\max_{k}(a_{ik}+b_{kj})
\]
for all $i,j$. If $\alpha\in\overline{\mathbb{R}}$ then $\alpha\otimes
A=\left(  \alpha\otimes a_{ij}\right)  $. For simplicity we will use the
convention of not writing the symbol $\otimes.$ Thus in what follows the
symbol $\otimes$ will not be used (except when necessary for clarity), and
unless explicitly stated otherwise, all multiplications indicated are in max-algebra.

The interest in max-algebra (today also called tropical linear algebra)
\bigskip was originally motivated by the possibility of dealing with a class
of non-linear problems in pure and applied mathematics, operational research,
science and engineering as if they were linear due to the fact that $\left(
\overline{\mathbb{R}},\oplus,\otimes\right)  $ is a commutative and idempotent
semifield. Besides the main advantage of using linear rather than non-linear
techniques, max-algebra enables us to efficiently describe and deal with
complex sets \cite{PB BMIS}, reveal combinatorial aspects of problems
\cite{Linalgcomb} and view a class of problems in a new, unconventional way.
The first pioneering papers appeared in the 1960s \cite{CG first}, \cite{CG0}
and \cite{Vorobjov}, followed by substantial contributions in the 1970s and
1980s such as \cite{CG1}, \cite{gondran minoux}, \cite{GM dioids},
\cite{KZimm} and \cite{Cohen et al}. Since 1995 we have seen a remarkable
expansion of this research field following a number of findings and
applications in areas as diverse as algebraic geometry \cite{mikhalkin} and
\cite{sturmfels}, geometry \cite{joswig}, control theory and optimization
\cite{baccelli}, phylogenetic \cite{speyer sturmfels}, modelling of the
cellular protein production \cite{Brackley} and railway scheduling
\cite{heidergott et al}. A number of research monographs have been published
\cite{baccelli}, \cite{PB book}, \cite{heidergott et al} and \cite{McEneaney}.
A chapter on max-algebra appears in a handbook of linear algebra \cite{hogben}
and a chapter on idempotent semirings can be found in a monograph on semirings
\cite{golan}.

Max-algebra covers a range of linear-algebraic problems in the max-linear
setting, such as systems of linear equations and inequalities, linear
independence and rank, bases and dimension, polynomials, characteristic
polynomials, matrix scaling, matrix equations, matrix orbits and periodicity
of matrix powers \cite{baccelli}, \cite{PB book}, \cite{CG1}, \cite{BS} and
\cite{heidergott et al}. Among the most intensively studied questions was the
\textit{eigenproblem}, that is the question, for a given square matrix $A $ to
find all values of $\lambda$ and non-trivial vectors $x$ such that $Ax=\lambda
x.$ This and related questions such as $z$-matrix equations $Ax\oplus
b=\lambda x$ \cite{BSS} have been answered \cite{Robustness paper},
\cite{CG1}, \cite{GM dioids}, \cite{gaubert thesis}, \cite{bapat first} and
\cite{PB book} with numerically stable low-order polynomial algorithms. The
same applies to the \textit{subeigenproblem} that is the problem of finding
solutions to $Ax\leq\lambda x$ \cite{SSB} and the \textit{supereigenproblem}
that is solution to $Ax\geq\lambda x,$ \cite{super ev} and \cite{ss supereig}.
Max-linear and integer max-linear programs have also been studied
\cite{KZimm}, \cite{PB book}, \cite{MLP}, \cite{gks} and \cite{PB+Marie}.

\bigskip A specific area of interest is in solving the above mentioned
problems with integrality requirements. It seems in general there is no
polynomial solution method to find an integer eigenvector of a real matrix in
max-algebra or to decide that there is none. A closely related \cite{PB+Marie}
is the question whether the mapping $x\rightarrow Ax$ has an integer image,
that is whether $Ax$ is an integer vector for at least one $x\in\mathbb{R}%
^{n}.$ The motivation for the latter comes from operational problems such as
the following job-scheduling task \cite{CG1} and \cite{PB book}: Products
$P_{1},...,P_{m}$ are prepared using $n$ machines (processors), every machine
contributing to the completion of each product by producing a component. It is
assumed that each machine can work for all products simultaneously and that
all these actions on a machine start as soon as the machine starts to work.
Let $a_{ij}$ be the duration of the work of the $j^{th}$ machine needed to
complete the component for $P_{i}$ $(i=1,...,m;j=1,...,n).$ If this
interaction is not required for some $i$ and $j$ then $a_{ij}$ is set to
$-\infty.$ The matrix $A=\left(  a_{ij}\right)  $ is called the
\textit{production matrix}. Let us denote by $x_{j}$ the starting time of the
$j^{th}$ machine $(j=1,...,n)$. Then all components for $P_{i}$ $(i=1,...,m)$
will be ready at time
\[
\max(x_{1}+a_{i1},...,x_{n}+a_{in}).
\]
Hence if $b_{1},...,b_{m}$ are given completion times then the starting times
have to satisfy the system of equations:%

\[
\max(x_{1}+a_{i1},...,x_{n}+a_{in})=b_{i}\text{ \ for all }i=1,...,m.
\]
Using max-algebra this system can be written in a compact form as a system of
linear equations:%
\begin{equation}
Ax=b. \label{one-sided}%
\end{equation}
A system of the form (\ref{one-sided}) is called a \textit{one-sided system of
max-linear equations} (or briefly a \textit{one-sided max-linear system }or
just a\textit{\ max-linear system}). Such systems are easily solvable \cite{CG
first}, \cite{KZimm} and \cite{PB book}, see also Section \ref{Sec Prelim}.
However, sometimes the vector $b$ of completion times is not given explicitly,
instead it is only required that completions of individual products occur at
discrete time intervals, for instance at integer times. This motivates the
study of integer images of max-linear mappings to which this paper aims to
contribute. More precisely, we deal with the question: Given a real matrix
$A,$ find a real vector $x$ such that $Ax$ is integer or decide that none
exists. In the terminology of combinatorial matrix theory this question reads:
is it possible to add constants to the columns of a given matrix so that all
row maxima are integer? We will call this problem the \textit{Integer Image
Problem} (IIP). This problem has been studied for some time \cite{PB+Marie
IEV}, \cite{PB+Marie} and \cite{Marie CC}, yet it seems to be still open
whether it can be answered in polynomial time. In this paper we present two
polynomially solvable special cases aiming to suggest a direction in which an
efficient method could be found for general matrices in the future. We also
provide a brief summary of a selection of already achieved results.

\section{ Definitions, notation and previous results \label{Sec Prelim}}

Throughout the paper we denote $-\infty$ by $\varepsilon$ (the neutral element
with respect to $\oplus$) and for convenience we also denote by the same
symbol any vector, whose all components are $-\infty,$ or a matrix whose all
entries are $-\infty.$ A matrix or vector with all entries equal to $0$ will
also be denoted by $0.$ If $a\in\mathbb{R}$ then the symbol $a^{-1}$ stands
for $-a.$ Matrices and vectors whose all entries are real numbers are called
\textit{finite}. We assume everywhere that $m,n\geq1$ are integers and denote
$M=\left\{  1,...,m\right\}  $ and $N=\left\{  1,...,n\right\}  .$

It is easily proved that if $A,B,C$ and $D$ are matrices of compatible sizes
(including vectors considered as $m\times1$ matrices) then the usual laws of
associativity and distributivity hold and also isotonicity is satisfied:%
\begin{equation}
A\geq B\Longrightarrow AC\geq BC\text{ \ and }DA\geq DB. \label{Isotonicity}%
\end{equation}

\bigskip

A square matrix is called \textit{diagonal} if all its diagonal entries are
real numbers and off-diagonal entries are $\varepsilon.$ More precisely, if
$x=\left(  x_{1},...,x_{n}\right)  ^{T}\in\mathbb{R}^{n}$ then $diag\left(
x_{1},...,x_{n}\right)  $ is the $n\times n$ diagonal matrix%
\[
\left(
\begin{array}
[c]{cccc}%
x_{1} & \varepsilon & ... & \varepsilon\\
\varepsilon & x_{2} & ... & \varepsilon\\
\vdots & \vdots & \ddots & \vdots\\
\varepsilon & \varepsilon & ... & x_{n}%
\end{array}
\right)  .
\]
The matrix $diag\left(  0\right)  $ is called the \textit{unit matrix} and
denoted $I.$ Obviously, $AI=IA=A$ whenever $A$ and $I$ are of compatible
sizes. A matrix obtained from a diagonal matrix by permuting the rows and/or
columns is called a \textit{generalized permutation matrix}. It is known that
in max-algebra generalized permutation matrices are the only type of
invertible matrices \cite{CG1} and \cite{PB book}.

If $A$ is a square matrix then the iterated product $AA...A$ in which the
symbol $A$ appears $k$-times will be denoted by $A^{k}$. By definition
$A^{0}=I$.

Given $A=(a_{ij})\in\overline{\mathbb{R}}^{n\times n}$ the symbol $D_{A}$
stands for the weighted digraph $\left(  N,E,w\right)  $ (called
\textit{associated with} $A$) where $E=\left\{  \left(  i,j\right)
;a_{ij}>\varepsilon\right\}  $ and $w\left(  i,j\right)  =a_{ij}$ for all
$(i,j)\in E.$ The symbol $\lambda(A)$ denotes the \textit{maximum cycle mean}
of $A$, that is:
\begin{equation}
\lambda(A)=\max_{\sigma}\mu(\sigma,A), \label{mcm}%
\end{equation}
where the maximization is taken over all elementary cycles in $D_{A},$ and
\begin{equation}
\mu(\sigma,A)=\frac{w(\sigma,A)}{l\left(  \sigma\right)  } \label{mean}%
\end{equation}
denotes the \textit{mean} of a cycle $\sigma$. With the convention
$\max\emptyset=\varepsilon$ the value $\lambda\left(  A\right)  $ always
exists since the number of elementary cycles is finite. It can be computed in
$O\left(  n^{3}\right)  $ time \cite{karp}, see also \cite{PB book}. We say
that $A$ is \textit{definite} if $\lambda\left(  A\right)  =0$ and
\textit{strongly definite} if it is definite and all diagonal entries of $A$
are zero.

Given $A\in\overline{\mathbb{R}}^{n\times n}$ it is usual \cite{CG1},
\cite{baccelli}, \cite{heidergott et al} and \cite{PB book} in max-algebra to
define the infinite series
\begin{equation}
A^{\ast}=I\oplus A\oplus A^{2}\oplus A^{3}\oplus...\text{ .} \label{Def Delta}%
\end{equation}
The matrix $A^{\ast}$ is called the \textit{strong transitive closure} of $A,$
or the \textit{Kleene Star}.

It follows from the definitions that every entry of the matrix sequence
\[
\left\{  I\oplus A\oplus A^{2}\oplus...\oplus A^{k}\right\}  _{k=0}^{\infty}%
\]
is a nondecreasing sequence in $\overline{\mathbb{R}}$ and therefore either it
is convergent to a real number (if bounded) or its limit is $+\infty$. If
$\lambda(A)\leq0$ then%
\[
A^{\ast}=I\oplus A\oplus A^{2}\oplus...\oplus A^{k-1}%
\]
for every $k\geq n$ and can be found using the Floyd-Warshall algorithm in
$O\left(  n^{3}\right)  $ time \cite{PB book}.

\bigskip The matrix\ $\lambda^{-1}A$ for $\lambda\in\mathbb{R}$ will be
denoted by $A_{\lambda}$ and $(A_{\lambda})^{\ast}$ will be shortly written as
$A_{\lambda}^{\ast}.$

\bigskip The \textit{eigenvalue-eigenvector problem }(briefly
\textit{eigenproblem}) is the following:

\textit{Given }$A\in\overline{\mathbb{R}}^{n\times n}$\textit{, find all
}$\lambda\in\overline{\mathbb{R}}$ \textit{(eigenvalues) and\ }$x\in
$\textit{\ }$\overline{\mathbb{R}}^{n},x\neq\varepsilon$%
\textit{\ (eigenvectors) such that }%
\[
Ax=\lambda x.
\]

This problem has been studied since the work of R.A.Cuninghame-Green
\cite{CG0}. An $n\times n$ matrix has up to $n$ eigenvalues with
$\lambda\left(  A\right)  $ always being the largest eigenvalue (called
\textit{principal}). This finding was first presented by R.A.Cuninghame-Green
\cite{CG1} and M.Gondran and M.Minoux \cite{gondran minoux}, see also
N.N.Vorobyov \cite{Vorobjov}. The full spectrum was first described by
S.Gaubert \cite{gaubert thesis} and R.B.Bapat, D.Stanford and P. van den
Driessche \cite{bapat first}. The spectrum and bases of all eigenspaces can be
found in $O(n^{3})$ time \cite{Robustness paper} and \cite{PB book}.

The aim of this paper is to study the existence of integer images of
max-linear mappings and therefore we summarize here only the results on finite
solutions and for finite $A$. For $A\in\mathbb{R}^{n\times n}$ and $\lambda
\in\mathbb{R}$ we denote
\[
V(A,\lambda)=\left\{  x\in\mathbb{R}^{n}:Ax=\lambda x\right\}  .
\]
In this case there are no eigenvalues other than the principal and we can
easily describe all eigenvectors:

\begin{theorem}
\label{Th Ray finiteness crit}\cite{CG0}, \cite{CG1}, \cite{gondran minoux}
\bigskip If $A\in\mathbb{R}^{n\times n}$ then $\lambda\left(  A\right)  $ is
the unique eigenvalue of $A$ and all eigenvectors of $A$ are finite. If $A$ is
strongly definite and $\lambda=\lambda\left(  A\right)  $ then%
\[
V\left(  A,\lambda\right)  =\left\{  A_{\lambda}^{\ast}u:u\in\mathbb{R}%
^{n}\right\}  .
\]

\end{theorem}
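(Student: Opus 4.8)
The plan is to treat the three assertions in turn, starting with finiteness, then uniqueness of the eigenvalue, and finally the description of $V(A,\lambda)$ in the strongly definite case.

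First I would establish that any eigenpair must be finite. Suppose $Ax=\mu x$ with $x\neq\varepsilon$, so some entry $x_k$ is real. Reading off row $i$ of the eigenequation gives $\mu+x_i=\max_{j}(a_{ij}+x_j)\geq a_{ik}+x_k$, and since $A$ is finite the right-hand side is real; hence $\mu+x_i>\varepsilon$ for every $i$. This forces $\mu\in\mathbb{R}$ (otherwise $\mu=\varepsilon$ and the left side would be $\varepsilon$) and then $x_i>\varepsilon$ for all $i$, i.e. $x$ is finite. This proves the second assertion and reduces everything to finite $\mu$ and finite $x$.

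Next I would show $\mu=\lambda(A)$. For the inequality $\mu\geq\lambda(A)$, take any elementary cycle $\sigma=(i_1,\ldots,i_l,i_1)$ in $D_A$; from $\mu+x_{i_t}=\max_j(a_{i_tj}+x_j)\geq a_{i_ti_{t+1}}+x_{i_{t+1}}$ and summing around $\sigma$ the eigenvector terms cancel, leaving $l\mu\geq w(\sigma,A)$, hence $\mu\geq\mu(\sigma,A)$; maximizing over $\sigma$ gives $\mu\geq\lambda(A)$. For the reverse inequality I would use the attained maxima: for each $i$ pick $\phi(i)$ with $\mu+x_i=a_{i\phi(i)}+x_{\phi(i)}$, iterate $\phi$ from an arbitrary vertex until a cycle $\tau$ is revealed (possible since $N$ is finite), and sum the corresponding equalities around $\tau$ to obtain $l(\tau)\mu=w(\tau,A)$, so $\mu=\mu(\tau,A)\leq\lambda(A)$. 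Combining the two bounds gives $\mu=\lambda(A)$, the unique eigenvalue. The extraction of this critical cycle $\tau$ is the step I expect to require the most care.

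For the last assertion, assume $A$ strongly definite, so $\lambda=\lambda(A)=0$ and all diagonal entries of $A$ vanish; then $\lambda^{-1}=0$ gives $A_\lambda=A$ and $A_\lambda^{\ast}=A^{\ast}$, which is finite because $\lambda(A)\leq0$. Since the diagonal of $A$ is $0$ and the off-diagonal entries are real, $A\geq I$. As $A^{\ast}\geq I$ and $A\geq I$, isotonicity (\ref{Isotonicity}) yields $AA^{\ast}\geq AI=A\geq I$, and combined with the identity $A^{\ast}=I\oplus AA^{\ast}$ this gives $A^{\ast}=AA^{\ast}$. Consequently, for every $u\in\mathbb{R}^n$ we have $A(A^{\ast}u)=(AA^{\ast})u=A^{\ast}u=\lambda(A^{\ast}u)$, so each $A^{\ast}u$ is a finite eigenvector and $\{A_\lambda^{\ast}u:u\in\mathbb{R}^n\}\subseteq V(A,\lambda)$. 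For the opposite inclusion, if $x\in V(A,\lambda)$ then $Ax=x$, so $A^kx=x$ for all $k\geq0$ and therefore $A^{\ast}x=(I\oplus A\oplus A^2\oplus\cdots)x=x$; taking $u=x$ exhibits $x$ as $A_\lambda^{\ast}u$. The two inclusions together give the claimed equality.
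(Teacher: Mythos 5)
The paper does not prove this theorem; it is quoted as a known result with citations to Cuninghame-Green and Gondran--Minoux, so there is no in-paper argument to compare against. Your proposal is correct and is the standard proof: the finiteness and uniqueness arguments (summing the row inequalities around an arbitrary elementary cycle for $\mu\geq\lambda(A)$, and extracting an elementary cycle from the attainment map $\phi$ for $\mu\leq\lambda(A)$) and the strongly definite case (using $A\geq I$ to get $AA^{\ast}=A^{\ast}$ for one inclusion, and $A^{\ast}x=x$ for the other) are exactly the classical route, with no gaps.
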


\bigskip In what follows $V\left(  A\right)  $ will stand for $V\left(
A,\lambda\left(  A\right)  \right)  .$

As usual for any $a\in\mathbb{R}$ we denote the lower integer part, upper
integer part and fractional part of $a$ by $\left\lfloor a\right\rfloor ,$
$\left\lceil a\right\rceil $ and $fr\left(  a\right)  .$ Hence $fr\left(
a\right)  =a-\left\lfloor a\right\rfloor .$ For any matrix $A$ the symbol
$\left\lfloor A\right\rfloor $ stands for the matrix obtained by replacing
every entry of $A$ by its lower integer part, similarly $\left\lceil
A\right\rceil $ and $fr\left(  A\right)  .$ The same conventions apply to
vectors. The set of integer eigenvectors of $A\in\mathbb{R}^{n\times n}$ will
be denoted by $IV\left(  A\right)  $, that is%
\[
IV\left(  A\right)  =V\left(  A\right)  \cap\mathbb{Z}^{n}.
\]
Given an $A\in\mathbb{R}^{m\times n}$ we will use the following notation:%
\[
\operatorname{Im}\left(  A\right)  =\left\{  Ax:x\in\mathbb{R}^{n}\right\}
\]
and%
\[
\operatorname*{IIm}\left(  A\right)  =\operatorname{Im}\left(  A\right)
\cap\mathbb{Z}^{m}.
\]
We call $\operatorname{Im}\left(  A\right)  $ $\left[  \operatorname*{IIm}%
\left(  A\right)  \right]  $ the \textit{image set} \textit{of} $A$
[\textit{integer image set} \textit{of} $A$].

If we randomly generate two real numbers then their fractional parts are
different with probability 1. Being motivated by this we say that a real
vector $v$ is \textit{typical}\ if no two components of $v$ have the same
fractional part. On the other hand if every component of a vector $v$ has the
same fractional part then we say that $v$ is \textit{uniform}. If every column
of a real matrix $A$ is typical [uniform] then we say that $A$ is
\textit{column typical [column uniform]}.

\begin{remark}
\label{Rem II when col uniform}Observe that $IIm\left(  A\right)
\neq\emptyset$ if $A$ has at least one uniform column.
\end{remark}

As the next theorem shows strongly definite matrices are an important class
for which there is an easy solution to the integer eigenvalue problem.

\begin{theorem}
\cite{PB+Marie IEV} \bigskip Let $A\in\mathbb{R}^{n\times n}$ be strongly
definite. Then

\begin{enumerate}
\item $IV\left(  A\right)  \neq\emptyset$ if and only if $\lambda\left(
\left\lceil A\right\rceil \right)  =0.$

\item If $IV\left(  A\right)  \neq\emptyset$ then $IV\left(  A\right)
=\left\{  \left\lceil A\right\rceil ^{\ast}z:z\in\mathbb{Z}^{n}\right\}  .$
\end{enumerate}
\end{theorem}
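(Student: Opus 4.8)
The plan is to exploit the previous theorem (Theorem~\ref{Th Ray finiteness crit}) applied both to $A$ and to $\lceil A\rceil$, with the bridge between the two supplied by a rounding argument for integer eigenvectors. First I would record the structural consequences of strong definiteness: since $\lambda(A)=0$ we have $A_{\lambda}=A$ and hence $V(A)=\{A^{\ast}u:u\in\mathbb{R}^n\}$, and since every diagonal entry of $A$ equals $0$ (an integer) we get $A\geq I$ and also that $\lceil A\rceil$ has zero diagonal. Because $\lceil A\rceil\geq A$ entrywise and the loops at each vertex have mean $0$, the maximum cycle mean satisfies $\lambda(\lceil A\rceil)\geq 0$ automatically; the content of the equivalence in part~1 is therefore the reverse inequality $\lambda(\lceil A\rceil)\leq 0$.

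The key step, and the one I expect to be the main obstacle, is the following rounding lemma: if $x\in\mathbb{Z}^n$ satisfies $Ax=x$ then in fact $\lceil A\rceil x=x$. To see this, fix a row $i$. For every $j$ integrality of $x_j$ gives $\lceil a_{ij}\rceil+x_j=\lceil a_{ij}+x_j\rceil$, so $(\lceil A\rceil x)_i=\max_j\lceil a_{ij}+x_j\rceil$. Since $Ax=x$ means $\max_j(a_{ij}+x_j)=x_i\in\mathbb{Z}$, each term satisfies $a_{ij}+x_j\leq x_i$ and hence $\lceil a_{ij}+x_j\rceil\leq x_i$, while a maximizing $j$ attains $a_{ij}+x_j=x_i$ exactly; thus $(\lceil A\rceil x)_i=x_i$. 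Combined with $\lceil A\rceil x\geq Ax=x$ this yields $\lceil A\rceil x=x$. The only delicate point is precisely this interaction of the ceiling with the max, where integrality of $x$ together with the zero diagonal is used.

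With this lemma part~1 closes quickly. For the forward direction, an integer $x\in IV(A)$ becomes, by the lemma, an eigenvector of the finite matrix $\lceil A\rceil$ for eigenvalue $0$; since by Theorem~\ref{Th Ray finiteness crit} the only eigenvalue of a finite square matrix is its maximum cycle mean, $\lambda(\lceil A\rceil)=0$. For the converse, if $\lambda(\lceil A\rceil)=0$ then $\lceil A\rceil$ is strongly definite, so $\lceil A\rceil^{\ast}=I\oplus\lceil A\rceil\oplus\cdots\oplus\lceil A\rceil^{n-1}$ is an integer matrix and, for any $z\in\mathbb{Z}^n$, the vector $y=\lceil A\rceil^{\ast}z$ is integer; by Theorem~\ref{Th Ray finiteness crit} applied to $\lceil A\rceil$ it lies in $V(\lceil A\rceil)$, i.e. $\lceil A\rceil y=y$. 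The chain $y=Iy\leq Ay\leq\lceil A\rceil y=y$, using $I\leq A\leq\lceil A\rceil$, forces $Ay=y$, so $y\in IV(A)$ and in particular $IV(A)\neq\emptyset$.

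Part~2 reuses the same two ingredients. The inclusion $\{\lceil A\rceil^{\ast}z:z\in\mathbb{Z}^n\}\subseteq IV(A)$ is exactly the squeeze just performed. For the reverse inclusion, take $x\in IV(A)$; the rounding lemma gives $\lceil A\rceil x=x$, whence $\lceil A\rceil^{k}x=x$ for all $k$ and therefore $\lceil A\rceil^{\ast}x=x$. Choosing $z=x\in\mathbb{Z}^n$ exhibits $x$ as $\lceil A\rceil^{\ast}z$, completing the equality.
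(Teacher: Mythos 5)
The paper states this theorem only with a citation to \cite{PB+Marie IEV} and gives no proof of its own, so there is nothing in-paper to compare against line by line; judged on its own, your argument is correct and complete, and it is essentially the standard proof from that reference: the rounding lemma ($x\in\mathbb{Z}^{n}$ and $Ax=x$ imply $\left\lceil A\right\rceil x=x$) identifies $IV(A)$ with $IV(\left\lceil A\right\rceil)$, after which Theorem \ref{Th Ray finiteness crit} applied to the integer strongly definite matrix $\left\lceil A\right\rceil$, together with the squeeze $I\leq A\leq\left\lceil A\right\rceil$, yields both parts. One cosmetic remark: the zero diagonal is not actually used in the rounding lemma itself (only the integrality of $x$ is); it enters only in guaranteeing $A\geq I$ and $\lambda\left(\left\lceil A\right\rceil\right)\geq0$.
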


\bigskip The\textit{\ max-algebraic permanent} of a matrix $A\in
\mathbb{R}^{n\times n}$ is an analogue of the conventional permanent:%
\[
per(A)=\sum\nolimits_{\pi\in P_{n}}^{\oplus}\prod\nolimits_{i\in N}^{\otimes
}a_{i,\pi(i)}%
\]
where $P_{n}$ is the set of all permutations of $N.$ In conventional notation
this reads:%
\[
per(A)=\max_{\pi\in P_{n}}\sum\nolimits_{i\in N}a_{i,\pi(i)}%
\]
which is the optimal value for the linear assignment problem for the matrix
$A$ \cite{Linalgcomb}, \cite{Burkard AP} and \cite{PB book}. Using the
notation%
\[
w\left(  \pi,A\right)  =\sum\nolimits_{i\in N}a_{i,\pi(i)}%
\]
for $\pi\in P_{n}$ we can then define the set of optimal permutations:%
\[
ap\left(  A\right)  =\left\{  \pi\in P_{n}:w\left(  \pi,A\right)
=per(A)\right\}  .
\]

Uniqueness of optimal permutations plays a significant role in max-algebra,
see for instance the question of regularity of matrices \cite{Linalgcomb}. It
is also important for integer images as shown in Theorem
\ref{Th Int im for CTL matrices} below. Note that it follows from the
definitions that $IIm\left(  A\right)  =IIm\left(  AQ\right)  $ for any
generalized permutation matrix $Q.$ It is known \cite{PB book} that for every
$A\in\mathbb{R}^{n\times n}$ with $\left\vert ap\left(  A\right)  \right\vert
=1$ there exists a unique generalized permutation matrix $Q$ such that $AQ$ is
strongly definite.

\begin{theorem}
\label{Th Int im for CTL matrices}\cite{PB+Marie IEV} \bigskip Let
$A\in\mathbb{R}^{n\times n}$ be column typical.

\begin{enumerate}
\item If $\left\vert ap\left(  A\right)  \right\vert >1$ then $IIm\left(
A\right)  =\emptyset.$

\item If $\left\vert ap\left(  A\right)  \right\vert =1$ and $Q$ is the
(unique) generalized permutation matrix such that $AQ$ is strongly definite
then%
\[
IIm\left(  A\right)  =IIm\left(  AQ\right)  =IV\left(  AQ\right)  .
\]

\end{enumerate}
\end{theorem}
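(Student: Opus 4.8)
The plan is to prove both parts by exploiting the connection between column-typical matrices, optimal permutations, and the reduction to strongly definite matrices. Throughout I will freely use the fact recorded just before the statement that $IIm(A)=IIm(AQ)$ for any generalized permutation matrix $Q$, together with the existence and uniqueness result: when $|ap(A)|=1$ there is a unique generalized permutation matrix $Q$ making $AQ$ strongly definite.

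For part (1), I would argue by contradiction. Suppose $A$ is column typical with $|ap(A)|>1$ but $IIm(A)\neq\emptyset$, so there is $x\in\mathbb{R}^n$ with $Ax\in\mathbb{Z}^n$. The key observation is that in the product $(Ax)_i=\max_j(a_{ij}+x_j)$, the maximum over $j$ is attained, and which column $j$ attains it in each row $i$ defines an optimal selection. Having two distinct optimal permutations $\pi\neq\rho$ in $ap(A)$ means the assignment problem has a genuine tie, and column typicality forces the fractional parts of the relevant entries to interact rigidly. I would show that the existence of an integer image forces each row maximum to be attained along a single column assignment, and that the typicality of the columns means the fractional parts $fr(a_{ij}+x_j)=0$ pin down the $fr(x_j)$ uniquely; but two competing optimal permutations would demand two incompatible systems of such fractional conditions, contradicting that no two entries in a column share a fractional part. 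So the main obstacle here is making precise exactly how $|ap(A)|>1$ produces a fractional-part conflict; I expect to phrase it via the observation that integrality of $Ax$ determines $fr(x_j)$ from any column on which the maximum is attained, and distinctness of two optimal permutations together with column typicality yields two different forced values of some $fr(x_j)$.

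For part (2), the strategy is a clean reduction. By the cited fact $IIm(A)=IIm(AQ)$, so it suffices to understand $IIm(AQ)$ where $B:=AQ$ is strongly definite. For a strongly definite matrix $B$ we have $\lambda(B)=0$ and all diagonal entries equal to zero, so by Theorem~\ref{Th Ray finiteness crit} the eigenvectors are $V(B)=\{B^{\ast}u:u\in\mathbb{R}^n\}$ (here $B_\lambda=B$ since $\lambda=0$). The crucial point is that for a strongly definite matrix the image set and the eigenspace coincide: because the diagonal of $B$ is zero, $Bx\geq x$ always, and one can show that $Bx$ itself is an eigenvector, giving $\operatorname{Im}(B)=V(B)$. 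Intersecting with $\mathbb{Z}^m=\mathbb{Z}^n$ then yields $IIm(B)=IV(B)$, which is precisely the claimed chain of equalities $IIm(A)=IIm(AQ)=IV(AQ)$.

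The main obstacle I anticipate lies in part (1): establishing rigorously that an integer image is incompatible with a tie among optimal permutations. The heart of the matter is that column typicality guarantees, for each column, distinct fractional parts, so that the condition $a_{ij}+x_j\in\mathbb{Z}$ (for the $j$ attaining the row maximum) determines $fr(x_j)$ unambiguously from the entries of column $j$; I would then need to verify that two distinct optimal permutations force the same index $j$ to satisfy two distinct fractional constraints, or else force some row to attain its integer maximum simultaneously through entries with differing fractional parts, either of which contradicts typicality. Part (2) should follow routinely once the identity $\operatorname{Im}(B)=V(B)$ for strongly definite $B$ is in hand.
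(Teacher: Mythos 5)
A preliminary remark: the paper itself gives no proof of this theorem --- it is quoted from \cite{PB+Marie IEV} --- so your argument has to stand entirely on its own merits.

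Part (1) has the right ingredients but is missing its decisive step, which you yourself flag as the anticipated obstacle. Suppose $Ax=b\in\mathbb{Z}^{n}$ and call an entry $a_{ij}$ \emph{active} if $a_{ij}+x_{j}=b_{i}$. Your fractional-part observation correctly shows that each column is active in at most one row (two active rows in column $j$ would force two entries of that column to share the fractional part $fr\left(  -x_{j}\right)  $), hence by counting exactly one, so the active entries form a permutation $\pi_{0}$. What is still needed is the bridge from ``active'' to ``optimal'': for every $\pi\in P_{n}$ one has $w\left(  \pi,A\right)  =\sum_{i}a_{i\pi(i)}\leq\sum_{i}\left(  b_{i}-x_{\pi(i)}\right)  =\sum_{i}b_{i}-\sum_{j}x_{j}$, with equality precisely when all entries $a_{i\pi(i)}$ are active; since $\pi_{0}$ attains equality, $per(A)=\sum_{i}b_{i}-\sum_{j}x_{j}$ and $ap\left(  A\right)  $ is exactly the set of permutations all of whose entries are active, i.e.\ $ap\left(  A\right)  =\left\{  \pi_{0}\right\}  $. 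Without this summation argument there is no reason why a second optimal permutation should impose any fractional condition on $x$ at all: optimality of $\sigma$ is a property of $A$ alone and does not by itself imply that $a_{i\sigma(i)}+x_{\sigma(i)}$ attains the row maximum.

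Part (2) contains a genuine error. The identity $\operatorname{Im}\left(  B\right)  =V\left(  B\right)  $ for strongly definite $B$ is false: strong definiteness gives $B\geq I$ and hence $Bx\geq x$, but $Bx$ is an eigenvector for every $x$ only if $B^{2}=B$, i.e.\ only if $B$ already equals its Kleene star. For a counterexample take $n=3$, zero diagonal, $b_{12}=1.3$, $b_{23}=1.7$, $b_{13}=0.1$, and the remaining off-diagonal entries very negative with distinct fractional parts in each column; this $B$ is strongly definite and even column typical, yet $\left(  B\left(  B0\right)  \right)  _{1}=b_{12}+b_{23}=3>1.3=\left(  B0\right)  _{1}$, so $B0\in\operatorname{Im}\left(  B\right)  \setminus V\left(  B\right)  $. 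The equality $IIm\left(  B\right)  =IV\left(  B\right)  $ is an integrality statement that genuinely needs column typicality, and the fact that your argument for (2) never uses that hypothesis is the tell-tale sign. The correct route reuses the machinery of part (1): if $Bx=b\in\mathbb{Z}^{n}$ then $ap\left(  B\right)  =\left\{  \pi_{0}\right\}  $ as above; since $B$ is strongly definite every permutation has weight at most $0=w\left(  \mathrm{id},B\right)  $, so the identity permutation is optimal and therefore $\pi_{0}=\mathrm{id}$, giving $b_{i}=b_{ii}+x_{i}=x_{i}$ and hence $x=b=Bx\in IV\left(  B\right)  $. The reverse inclusion $IV\left(  B\right)  \subseteq IIm\left(  B\right)  $ is immediate because $\lambda\left(  B\right)  =0$. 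Your reduction $IIm\left(  A\right)  =IIm\left(  AQ\right)  $ via the generalized permutation matrix is fine.
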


\bigskip Theorem \ref{Th Int im for CTL matrices} effectively solves the IIP
for column typical square matrices. It is not difficult to see that in general
$m\leq n$ is a necessary condition for $\operatorname*{IIm}\left(  A\right)
\neq\emptyset$ if $A\in\mathbb{R}^{m\times n}$ is column typical. If $m\leq n$
then a necessary and sufficient condition for $\operatorname*{IIm}\left(
A\right)  \neq\emptyset$ is existence of a submatrix $A^{\prime}\in
\mathbb{R}^{m\times m}$ for which $\operatorname*{IIm}\left(  A^{\prime
}\right)  \neq\emptyset.$ Hence there is the possibility of solving the IIP
for $m\times n$ matrices by checking all $m\times m$ submatrices. The number
of such submatrices is $\binom{n}{m}$ which is polynomial when $m$ is fixed.
In particular, this immediately yields an $O\left(  n^{3}\right)  $ method for
answering the problem for $3\times n$ column typical matrices. One of the aims
of this paper is to present an $O\left(  n^{2}\right)  $ method for this
particular special case.

It will be useful to also define min-algebra over $\mathbb{R}$ \cite{CG1} and
\cite{PB book}:%
\[
a\oplus^{\prime}b=\min(a,b)
\]
and
\[
a\otimes^{\prime}b=a\otimes b
\]
for all $a$ and $b.$ We extend the pair of operations $\left(  \oplus^{\prime
},\otimes^{\prime}\right)  $ to matrices and vectors in the same way as in
max-algebra. We also define the \textit{conjugate} $A^{\#}=-A^{T}.$ Note that
isotonicity holds for $\left(  \oplus^{\prime},\otimes^{\prime}\right)  $
similarly as for $\left(  \oplus,\otimes\right)  $, see (\ref{Isotonicity}).

We will usually not write the operator $\otimes^{\prime}$and for matrices the
convention applies that if no multiplication operator appears then the product
is in min-algebra whenever it follows the symbol $\#$, otherwise it is in
max-algebra. In this way a residuated pair of operations (a special case of
Galois connection) has been defined, namely%
\begin{equation}
Ax\leq y\Longleftrightarrow x\leq A^{\#}y \label{residuation}%
\end{equation}
for all $x,y\in\mathbb{R}^{n}.$ Hence $Ax\leq y$ implies $A(A^{\#}y)\leq y$.
It follows immediately that a one-sided system $Ax=b$ has a solution if and
only if $A\left(  A^{\#}b\right)  =b$ (see Corollary \ref{Cor 2} below) and
using isotonicity then the system $Ax\leq b$ always has an infinite number of
solutions with $A^{\#}b$ being the greatest solution.

\section{\bigskip Finding an integer image for a $3\times n$ matrix}

We start with historically the first result in max-algebra. In what follows if
$A\in\mathbb{R}^{m\times n}$ and $j\in N$ then $A_{j}$ will denote the
$j^{th}$ column of $A.$

\textbf{Problem P1:} Given $A\in\mathbb{R}^{m\times n}$ and $b\in
\mathbb{R}^{m}$, find all $x=\left(  x_{1},...,x_{n}\right)  ^{T}\in
\mathbb{R}^{n}$ such that $Ax=b$ or decide that none exist.

For $A=\left(  a_{ij}\right)  \in\mathbb{R}^{m\times n}$ and $b=\left(
b_{1},...,b_{m}\right)  ^{T}\in\mathbb{R}^{m}$ define
\begin{equation}
\overline{x}=A^{\#}b, \label{xbar def}%
\end{equation}
that is
\[
\overline{x}_{j}=\min_{i\in M}\left(  b_{i}-a_{ij}\right)  =-\max_{i\in
M}\left(  a_{ij}-b_{i}\right)
\]
for all $j\in N$ and%
\[
M_{j}\left(  A,b\right)  =\left\{  i\in M:\overline{x}_{j}=b_{i}%
-a_{ij}\right\}  ,j\in N.
\]
The notation $M_{j}\left(  A,b\right)  $ will be shortened to $M_{j}$ if no
confusion can arise. The answer to P1 is summarized in the following statement.

\begin{proposition}
\label{Prop first}\cite{CG first}, \cite{CG1}, \cite{PB book} Let
$\overline{x}$ be as defined in (\ref{xbar def}) and $x\in\mathbb{R}^{n}.$ Then

\begin{enumerate}
\item[(a)] $A\overline{x}\leq b.$

\item[(b)] $Ax\leq b$ if and only if $x\leq\overline{x}.$

\item[(c)] $Ax=b$ if and only if $x\leq\overline{x}$ and
\[
\bigcup\nolimits_{x_{j}=\overline{x}_{j}}M_{j}=M.
\]

\end{enumerate}
\end{proposition}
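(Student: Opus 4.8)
The plan is to establish the three parts in order, since each builds on the previous, and to work directly from the definition of $\overline{x}$ together with the residuation property (\ref{residuation}).

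First I would prove (a). By construction $\overline{x}_{j} = -\max_{i}(a_{ij}-b_{i})$, so for every $i\in M$ and every $j\in N$ we have $\overline{x}_{j} \leq b_{i}-a_{ij}$, i.e. $a_{ij}+\overline{x}_{j} \leq b_{i}$. Taking the maximum over $j$ gives $(A\overline{x})_{i} = \max_{j}(a_{ij}+\overline{x}_{j}) \leq b_{i}$ for all $i$, which is exactly $A\overline{x}\leq b$. (Alternatively this is immediate from (\ref{residuation}): since $\overline{x}=A^{\#}b$ we have $\overline{x}\leq A^{\#}b$, hence $A\overline{x}\leq b$.)

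For (b), the direction $x\leq\overline{x}\Rightarrow Ax\leq b$ follows from isotonicity (\ref{Isotonicity}) applied to part (a): $Ax\leq A\overline{x}\leq b$. For the converse, $Ax\leq b$ means $a_{ij}+x_{j}\leq b_{i}$ for all $i,j$, hence $x_{j}\leq b_{i}-a_{ij}$ for all $i$; taking the minimum over $i$ yields $x_{j}\leq\overline{x}_{j}$ for every $j$. This is just the residuation equivalence (\ref{residuation}) read coordinatewise, so part (b) is really a restatement of the Galois connection. Parts (a) and (b) are routine; the genuine content lies in (c).

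For (c), suppose first that $Ax=b$. Then $Ax\leq b$, so by (b) we have $x\leq\overline{x}$. Fix any $i\in M$; since $(Ax)_{i}=b_{i}$ there is some $j$ attaining the maximum, so $a_{ij}+x_{j}=b_{i}$. Combined with $x_{j}\leq\overline{x}_{j}\leq b_{i}-a_{ij}$ this forces $x_{j}=\overline{x}_{j}$ and $\overline{x}_{j}=b_{i}-a_{ij}$, so $i\in M_{j}$ with $x_{j}=\overline{x}_{j}$; thus $i$ lies in the stated union, giving $\bigcup_{x_{j}=\overline{x}_{j}}M_{j}=M$. Conversely, assume $x\leq\overline{x}$ and the union covers $M$. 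By (b), $Ax\leq b$, so it remains to show $(Ax)_{i}\geq b_{i}$ for each $i$. Given $i$, the covering hypothesis supplies an index $j$ with $x_{j}=\overline{x}_{j}$ and $i\in M_{j}$, the latter meaning $\overline{x}_{j}=b_{i}-a_{ij}$. Then $a_{ij}+x_{j}=a_{ij}+\overline{x}_{j}=b_{i}$, so $(Ax)_{i}\geq b_{i}$, and together with $Ax\leq b$ this gives $(Ax)_{i}=b_{i}$. Hence $Ax=b$. The one point demanding care is the bookkeeping in the forward direction of (c): one must argue that a maximizing index $j$ is necessarily a \emph{tight} index (both $x_{j}=\overline{x}_{j}$ and $i\in M_{j}$), which is where the inequality chain $a_{ij}+x_{j}\leq a_{ij}+\overline{x}_{j}\leq b_{i}$ collapsing to equalities does the work.
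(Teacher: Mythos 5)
Your proof is correct and complete; the paper itself states this proposition without proof (citing the classical references), and your argument is exactly the standard residuation/Galois-connection proof found there: coordinatewise verification for (a) and (b), and for (c) the observation that a maximizing index must make the chain $a_{ij}+x_{j}\leq a_{ij}+\overline{x}_{j}\leq b_{i}$ collapse to equalities. Nothing is missing.
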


\begin{corollary}
\label{Cor 1}\bigskip$A\overline{x}=b$ if and only if $\bigcup\nolimits_{j\in
N}M_{j}=M.$
\end{corollary}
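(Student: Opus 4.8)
The plan is to obtain Corollary~\ref{Cor 1} as an immediate specialization of Proposition~\ref{Prop first}(c), taking $x=\overline{x}$. First I would note that when $x=\overline{x}$ the hypothesis $x\leq\overline{x}$ in part (c) is trivially satisfied, so the remaining condition $Ax=b$ reduces to the single set-covering equality. Since every $j\in N$ satisfies $x_{j}=\overline{x}_{j}$ in this case, the index set $\{j:x_{j}=\overline{x}_{j}\}$ is all of $N$, and the union $\bigcup_{x_{j}=\overline{x}_{j}}M_{j}$ collapses to $\bigcup_{j\in N}M_{j}$.

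Carrying this out, I would argue both directions at once. By part (a) we already know $A\overline{x}\leq b$, so the only thing preventing equality is a row $i\in M$ at which the maximum $\max_{j}(\overline{x}_{j}+a_{ij})$ falls strictly below $b_{i}$. By the definition of $\overline{x}_{j}$ and of $M_{j}$, the row index $i$ belongs to some $M_{j}$ precisely when $\overline{x}_{j}+a_{ij}=b_{i}$, i.e.\ when column $j$ attains the bound in row $i$. Hence $\bigcup_{j\in N}M_{j}=M$ is exactly the statement that every row is attained by at least one column, which is equivalent to $A\overline{x}=b$.

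Formally the cleanest route is simply to invoke Proposition~\ref{Prop first}(c) with the particular choice $x=\overline{x}$: the first clause $x\leq\overline{x}$ holds with equality and is therefore vacuous, leaving $A\overline{x}=b$ if and only if $\bigcup_{j\in N}M_{j}=M$, which is the assertion of the corollary. I do not expect any genuine obstacle here, since this is a direct corollary rather than an independent result; the only point requiring a word of care is the observation that specializing $x$ to $\overline{x}$ makes the index condition $x_{j}=\overline{x}_{j}$ hold for all $j\in N$, so that the restricted union in part (c) becomes the full union over $N$ appearing in the statement.
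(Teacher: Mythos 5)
Your proposal is correct and is exactly the intended argument: the paper states Corollary~\ref{Cor 1} without proof as an immediate consequence of Proposition~\ref{Prop first}(c) with $x=\overline{x}$, whereupon $x\leq\overline{x}$ is automatic and the restricted union becomes $\bigcup_{j\in N}M_{j}$. No issues.
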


\begin{corollary}
\label{Cor 2}$Ax=b$ has a solution if and only if $A\overline{x}=b.$
\end{corollary}

\textbf{Problem P2:} Given $A=\left(  a_{ij}\right)  \in\mathbb{R}^{m\times
n},$ $b=\left(  b_{1},...,b_{m}\right)  ^{T}\in\mathbb{R}^{m}$ and $d=\left(
d_{1},...,d_{n}\right)  ^{T}\in\mathbb{R}^{n}$, find an $x=\left(
x_{1},...,x_{n}\right)  ^{T}\in\mathbb{R}^{n}$ such that $Ax=b,x\leq d$ or
decide that none exists.

\begin{proposition}
\label{Prop second}$\left(  \exists x\in\mathbb{R}^{n}\right)  Ax=b,x\leq
d\iff Az=b,$ where $z=d\oplus^{\prime}\overline{x}$.
\end{proposition}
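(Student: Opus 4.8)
The plan is to prove the equivalence by establishing each implication separately, exploiting that $z=d\oplus^{\prime}\overline{x}$ is by construction the componentwise minimum $z_{j}=\min(d_{j},\overline{x}_{j})$, and hence the largest vector that simultaneously satisfies $z\leq d$ and $z\leq\overline{x}$.

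First I would dispose of the easy direction ($\Leftarrow$). Assuming $Az=b$, one observes that $z\leq d$ holds automatically, since each $z_{j}=\min(d_{j},\overline{x}_{j})\leq d_{j}$. Thus $x=z$ is itself a witness satisfying both $Ax=b$ and $x\leq d$, which is exactly what the left-hand side asserts.

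For the forward direction ($\Rightarrow$), suppose some $x\in\mathbb{R}^{n}$ satisfies $Ax=b$ and $x\leq d$. From $Ax=b$ I get $Ax\leq b$, so Proposition \ref{Prop first}(b) yields $x\leq\overline{x}$. Combining $x\leq\overline{x}$ with $x\leq d$ gives $x\leq z$ componentwise. The goal is then to show $Az=b$, which I would obtain by a sandwiching argument: applying isotonicity (\ref{Isotonicity}) to $x\leq z$ gives $b=Ax\leq Az$, while applying isotonicity to $z\leq\overline{x}$ together with Proposition \ref{Prop first}(a) gives $Az\leq A\overline{x}\leq b$. The two inequalities force $Az=b$.

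The argument is short, and the only point requiring care is the sandwiching step: it is essential that $z$ lies between the witness $x$ and the greatest subsolution $\overline{x}$, so that isotonicity can be applied in both directions to squeeze $Az$ between $b$ and $b$. I do not anticipate a genuine obstacle, since all ingredients—the residuation characterisation of $\overline{x}$ in Proposition \ref{Prop first} and isotonicity—are already available; the main thing to verify is that no information is lost when passing from the equality $Ax=b$ to the two inequalities bounding $Az$.
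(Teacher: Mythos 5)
Your proof is correct, but it takes a genuinely different route from the paper's. The paper argues through the combinatorial solvability criterion of Proposition \ref{Prop first}(c): from the witness $x$ it extracts the covering condition $\bigcup_{x_{j}=\overline{x}_{j}}M_{j}=M$, shows that every index $j$ with $x_{j}=\overline{x}_{j}$ also satisfies $z_{j}=\overline{x}_{j}$ (because $x\leq d$ forces $\overline{x}_{j}\leq d_{j}$ there), and concludes that the sets $M_{j}$ indexed by $\{j:z_{j}=\overline{x}_{j}\}$ still cover $M$, whence $Az=b$ by part (c) again. You instead observe $x\leq z\leq\overline{x}$ and squeeze: isotonicity gives $b=Ax\leq Az\leq A\overline{x}\leq b$, using only parts (a) and (b) of Proposition \ref{Prop first}. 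Your argument is shorter and purely order-theoretic --- it would work for any isotone map admitting a greatest subsolution, not just max-linear ones --- while the paper's version keeps explicit track of which rows are attained at which columns, which is the bookkeeping that the rest of the paper (e.g.\ the solution of Problem P5) actually relies on. Both are complete; there is no gap in yours.
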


\begin{proof}
"If" is obvious since $z\leq d.$

Suppose now that $Ax=b,x\leq d$ for some $x\in\mathbb{R}^{n}.$ By Proposition
\ref{Prop first} (c) then $x\leq\overline{x}$ and $\bigcup\nolimits_{x_{j}%
=\overline{x}_{j}}M_{j}=M.$ It follows from the definition of $z$ that%
\[
z_{j}=\overline{x}_{j}\iff\overline{x}_{j}\leq d_{j}%
\]
and $z\leq\overline{x}.$ Hence if $x_{j}=\overline{x}_{j}$ then $\overline
{x}_{j}\leq d_{j}.$ Therefore $z_{j}=\overline{x}_{j}$ and thus%
\[
M=\bigcup\nolimits_{x_{j}=\overline{x}_{j}}M_{j}\subseteq\bigcup
\nolimits_{z_{j}=\overline{x}_{j}}M_{j}\subseteq M,
\]
from which the statement follows.
\end{proof}

\bigskip\textbf{Problem P3:} Given $A\in\mathbb{R}^{m\times n},$ find a point
in $\operatorname*{IIm}\left(  A\right)  $ or decide that there is none.

Solution to P3 is easy for $m=2$ as can be seen from the next few lines where
we give a full description of $\operatorname{Im}\left(  A\right)  $ and
$\operatorname*{IIm}\left(  A\right)  $. The primary objective of this paper
is to present a solution for $m=3$ provided that $A$ is column typical, which
is done later on.

\begin{proposition}
\label{Prop 2xn}\cite{PB+Marie IEV} Let $A\in\mathbb{R}^{2\times n}.$ Then%
\[
\operatorname{Im}\left(  A\right)  =\left\{  \left(  y_{1},y_{2}\right)
^{T}\in\mathbb{R}^{2}:y_{2}-y_{1}\in\left[  \underline{\alpha}\left(
A\right)  ,\overline{\alpha}\left(  A\right)  \right]  \right\}  ,
\]
where
\[
\underline{\alpha}\left(  A\right)  =\min_{j\in N}\left(  a_{2j}%
-a_{1j}\right)
\]
and
\[
\overline{\alpha}\left(  A\right)  =\max_{j\in N}\left(  a_{2j}-a_{1j}\right)
.
\]

\end{proposition}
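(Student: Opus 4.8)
The plan is to characterise the image set of a $2\times n$ matrix by describing exactly which pairs $(y_1,y_2)^T$ arise as $Ax$ for some $x\in\mathbb{R}^n$. Since $Ax = \bigoplus_{j\in N} x_j A_j$ (a max-plus combination of the columns), the key observation is that only the \emph{difference} $y_2 - y_1$ is constrained, while $y_1$ itself can be shifted freely. First I would note that scaling $x$ by a constant $c$ (i.e. replacing $x$ by $c\otimes x = c + x$ componentwise) replaces $Ax$ by $c + Ax$, shifting both coordinates of the output equally and leaving $y_2 - y_1$ unchanged. This reduces the problem to identifying the attainable values of the single quantity $y_2 - y_1$.

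The forward inclusion $\subseteq$ is the routine direction. Given any $x\in\mathbb{R}^n$, write $y_1 = \max_j (x_j + a_{1j})$ and $y_2 = \max_j (x_j + a_{2j})$. I would bound $y_2 - y_1$ from above by picking the index $k$ achieving the maximum in $y_2$; then $y_1 \geq x_k + a_{1k}$, so $y_2 - y_1 \leq (x_k + a_{2k}) - (x_k + a_{1k}) = a_{2k} - a_{1k} \leq \overline{\alpha}(A)$. The lower bound $y_2 - y_1 \geq \underline{\alpha}(A)$ follows by the symmetric argument, selecting the index maximising $y_1$. Thus every point of $\operatorname{Im}(A)$ satisfies $y_2 - y_1 \in [\underline{\alpha}(A), \overline{\alpha}(A)]$.

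For the reverse inclusion $\supseteq$, I must exhibit, for each target value $t\in[\underline{\alpha}(A),\overline{\alpha}(A)]$ and each desired $y_1\in\mathbb{R}$, a vector $x$ with $Ax = (y_1, y_1+t)^T$. The natural approach is to activate a single well-chosen column: if $t = a_{2k}-a_{1k}$ for some column $k$, then setting $x_k$ appropriately and pushing all other $x_j$ to $-\infty$ (or sufficiently negative) makes column $k$ dominate both rows, yielding exactly the ratio $t$. The endpoints $\underline{\alpha}(A)$ and $\overline{\alpha}(A)$ are realised this way by the minimising and maximising columns. The genuine obstacle is the \emph{interior} values of the interval: an arbitrary $t$ strictly between two consecutive attainable column-differences need not equal any single $a_{2j}-a_{1j}$, so a single dominating column will not suffice. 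Here I would blend the two extreme columns, taking $x$ supported on the columns achieving $\underline{\alpha}(A)$ and $\overline{\alpha}(A)$ and tuning their relative weights so that one column wins in row $1$ and the other wins in row $2$; as the weight ratio varies continuously, $y_2 - y_1$ sweeps continuously across the whole interval $[\underline{\alpha}(A),\overline{\alpha}(A)]$. Verifying that this two-column combination indeed produces every intermediate value, and that the outputs in the two rows come from the intended columns, is the technical heart of the argument; once this continuity/realisability claim is established, combined with the free shift in $y_1$, the stated equality follows.
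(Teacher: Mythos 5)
Your proposal is correct in outline but takes a genuinely different route from the paper. The paper does not split the argument into two inclusions at all: it invokes the solvability criterion for one-sided systems (Corollaries \ref{Cor 1} and \ref{Cor 2}), by which $y\in\operatorname{Im}\left(  A\right)  $ if and only if $A\overline{x}=y$ for $\overline{x}=A^{\#}y$, if and only if the sets $M_{j}$ cover $M=\left\{  1,2\right\}  $; written out for two rows this says precisely that $a_{1j}-y_{1}\geq a_{2j}-y_{2}$ for some $j$ and $a_{2l}-y_{2}\geq a_{1l}-y_{1}$ for some $l$, i.e. $y_{2}-y_{1}\geq\underline{\alpha}$ and $y_{2}-y_{1}\leq\overline{\alpha}$. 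This yields both directions in one stroke and supplies an explicit witness. Your forward inclusion (bounding $y_{2}-y_{1}$ via the column attaining each row maximum) is fine and is essentially the ``only if'' half of that criterion in disguise. For the reverse inclusion you leave the two-column interpolation as an unverified claim; it does go through --- fix $x_{q}=0$ on a column attaining $\overline{\alpha}$, let $x_{p}=s$ range over $\mathbb{R}$ on a column attaining $\underline{\alpha}$, push all remaining coordinates sufficiently far down (they must stay finite, not $\varepsilon$), and note that $s\mapsto\left(  Ax\right)  _{2}-\left(  Ax\right)  _{1}$ is continuous, tends to $\overline{\alpha}$ as $s\rightarrow-\infty$ and to $\underline{\alpha}$ as $s\rightarrow+\infty$, so the intermediate value theorem finishes --- but as written this step is only announced, not proved. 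You should either carry it out or, more economically, take $x=A^{\#}y$ directly and check that the row $1$ maximum is attained at a column achieving $\underline{\alpha}$ and the row $2$ maximum at a column achieving $\overline{\alpha}$; that is exactly the paper's argument and avoids the continuity step entirely. What your approach buys is independence from the residuation machinery of Proposition \ref{Prop first}; what the paper's buys is brevity and a closed-form preimage.
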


\begin{proof}
\bigskip Let $y\in\mathbb{R}^{2}.$ Then by Corollaries \ref{Cor 1} and
\ref{Cor 2} $y\in\operatorname{Im}\left(  A\right)  $ if and only if%
\[
a_{1j}-y_{1}\geq a_{2j}-y_{2}%
\]
and%
\[
a_{2l}-y_{2}\geq a_{1l}-y_{1}%
\]
for some $j,l\in N.$

Equivalently,%
\[
y_{2}-y_{1}\geq a_{2j}-a_{1j}%
\]
and%
\[
y_{2}-y_{1}\leq a_{2l}-a_{1l}%
\]
for some $j,l\in N,$ from which the statement follows.
\end{proof}

\bigskip Note that we will write shortly $\underline{\alpha},\overline{\alpha
}$ instead of $\underline{\alpha}\left(  A\right)  ,\overline{\alpha}\left(
A\right)  $ if no confusion can arise.

\begin{corollary}
\label{Cor 3}Let $A\in\mathbb{R}^{2\times n}.$ Then%
\[
\operatorname*{IIm}\left(  A\right)  =\left\{  \left(  y_{1},y_{2}\right)
^{T}\in\mathbb{Z}^{2}:y_{2}-y_{1}\in\left[  \underline{\alpha},\overline
{\alpha}\right]  \right\}  .
\]

\end{corollary}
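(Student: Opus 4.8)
The plan is to deduce Corollary \ref{Cor 3} directly from the description of $\operatorname{Im}(A)$ given in Proposition \ref{Prop 2xn}, using only the definition $\operatorname*{IIm}(A)=\operatorname{Im}(A)\cap\mathbb{Z}^{2}$. Since Proposition \ref{Prop 2xn} already characterizes $\operatorname{Im}(A)$ as the set of all $(y_1,y_2)^T\in\mathbb{R}^2$ with $y_2-y_1\in[\underline{\alpha},\overline{\alpha}]$, intersecting with $\mathbb{Z}^2$ should be essentially immediate: the condition $y_2-y_1\in[\underline{\alpha},\overline{\alpha}]$ does not interact with the integrality of the individual coordinates, so the two constraints simply conjoin.

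First I would write
\[
\operatorname*{IIm}(A)=\operatorname{Im}(A)\cap\mathbb{Z}^2
=\left\{(y_1,y_2)^T\in\mathbb{R}^2:y_2-y_1\in[\underline{\alpha},\overline{\alpha}]\right\}\cap\mathbb{Z}^2,
\]
and then observe that a point lies in this intersection precisely when it satisfies both defining conditions at once, namely $(y_1,y_2)^T\in\mathbb{Z}^2$ and $y_2-y_1\in[\underline{\alpha},\overline{\alpha}]$. Rewriting the intersection of the two set-builder expressions as a single set-builder expression over $\mathbb{Z}^2$ then yields exactly the claimed formula. This is a one-line set-theoretic identity once Proposition \ref{Prop 2xn} is invoked.

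There is essentially no obstacle here, since the corollary is a trivial specialization of the preceding proposition to integer points; the only thing worth noting is that no nonemptiness or realizability issue arises, because Proposition \ref{Prop 2xn} gives an exact description of $\operatorname{Im}(A)$ rather than merely a necessary condition, so every integer vector meeting the gap constraint genuinely lies in the image. Consequently I expect the proof to consist of the single substitution and rearrangement above, with the remark that the set on the right may of course be empty (for instance when $[\underline{\alpha},\overline{\alpha}]$ contains no value of the form $y_2-y_1$ with $y_1,y_2\in\mathbb{Z}$ is impossible, so in fact it is always nonempty whenever the interval is nonempty), but such observations are not needed for the statement itself.
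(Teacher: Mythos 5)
Your argument is correct and is exactly the paper's (implicit) proof: the corollary is stated without proof as the immediate intersection of the description of $\operatorname{Im}(A)$ from Proposition \ref{Prop 2xn} with $\mathbb{Z}^{2}$, which is what you do. Only your closing aside is off: the set on the right can indeed be empty, precisely when $\left[\underline{\alpha},\overline{\alpha}\right]$ contains no integer (cf.\ Corollary \ref{Cor 4}), since for $(y_{1},y_{2})^{T}\in\mathbb{Z}^{2}$ the difference $y_{2}-y_{1}$ is forced to be an integer; but, as you note, this does not affect the statement itself.
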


\begin{corollary}
\label{Cor 4}Let $A\in\mathbb{R}^{2\times n}.$ Then $\operatorname*{IIm}%
\left(  A\right)  \neq\emptyset$ if and only if $\left[  \underline{\alpha
},\overline{\alpha}\right]  \cap\mathbb{Z}\neq\emptyset.$
\end{corollary}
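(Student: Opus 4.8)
The plan is to derive this immediately from Corollary \ref{Cor 3}, which already supplies an exact description of $\operatorname*{IIm}\left(  A\right)$. The whole content of the corollary is the observation that the linear form $\left(  y_{1},y_{2}\right)  \mapsto y_{2}-y_{1}$ maps $\mathbb{Z}^{2}$ onto $\mathbb{Z}$, so that asking for an integer point of $\operatorname*{IIm}\left(  A\right)$ is the same as asking for an integer value of $y_{2}-y_{1}$ inside the interval $\left[  \underline{\alpha},\overline{\alpha}\right]$.

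For the forward implication I would argue as follows. Suppose $\operatorname*{IIm}\left(  A\right)  \neq\emptyset$ and pick $\left(  y_{1},y_{2}\right)  ^{T}\in\operatorname*{IIm}\left(  A\right)$. By Corollary \ref{Cor 3} we have $y_{1},y_{2}\in\mathbb{Z}$ and $y_{2}-y_{1}\in\left[  \underline{\alpha},\overline{\alpha}\right]$. Setting $k=y_{2}-y_{1}$, the integrality of $y_{1}$ and $y_{2}$ gives $k\in\mathbb{Z}$, while the membership condition gives $k\in\left[  \underline{\alpha},\overline{\alpha}\right]$; hence $k\in\left[  \underline{\alpha},\overline{\alpha}\right]  \cap\mathbb{Z}$, which is therefore nonempty.

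For the reverse implication I would simply exhibit a witness. If there is an integer $k\in\left[  \underline{\alpha},\overline{\alpha}\right]  \cap\mathbb{Z}$, take $y_{1}=0$ and $y_{2}=k$. Then $\left(  y_{1},y_{2}\right)  ^{T}\in\mathbb{Z}^{2}$ and $y_{2}-y_{1}=k\in\left[  \underline{\alpha},\overline{\alpha}\right]$, so by Corollary \ref{Cor 3} this point lies in $\operatorname*{IIm}\left(  A\right)$, proving $\operatorname*{IIm}\left(  A\right)  \neq\emptyset$.

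I do not expect any genuine obstacle here: once Corollary \ref{Cor 3} is in hand the statement is an immediate reformulation, and the only thing one must not overlook is that the difference of two integers can realize every integer value, which is what lets the "if" direction produce a point with $y_{2}-y_{1}=k$ for the prescribed $k$. The result is thus best presented as a short two-line consequence of the preceding corollary rather than as an independent argument.
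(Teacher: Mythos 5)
Your proof is correct and matches the paper's (implicit) reasoning: the paper states Corollary \ref{Cor 4} without proof as an immediate consequence of Corollary \ref{Cor 3}, and your two directions — reading off $y_{2}-y_{1}\in\left[\underline{\alpha},\overline{\alpha}\right]\cap\mathbb{Z}$ from an integer point, and conversely exhibiting the witness $\left(0,k\right)^{T}$ — are exactly the intended argument.
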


\bigskip\textbf{Problem P4:} Given $A\in\mathbb{R}^{2\times n}$ and $L=\left(
l_{1},l_{2}\right)  ^{T},U=\left(  u_{1},u_{2}\right)  ^{T}\in\mathbb{R}^{2},$
describe the set%
\[
S=\left\{  y\in\operatorname*{IIm}\left(  A\right)  :L\leq y\leq U\right\}  .
\]
By Corollary \ref{Cor 3} the set $S$ (if non-empty) consists of integer points
on adjacent parallel line segments. We may assume $L\in\mathbb{Z}^{2} $ (or
take $\left\lceil L\right\rceil $ if necessary). An answer to P4 is in the
next proposition which follows from Corollary \ref{Cor 3} immediately.

\begin{proposition}
\label{Prop Description of integer images >=L}The set $S=\left\{
y\in\operatorname*{IIm}\left(  A\right)  :L\leq y\leq U\right\}  $ consists of
integer points on line segments described by the following conditions:%
\begin{equation}
\left.
\begin{array}
[c]{c}%
y_{2}=y_{1}+\alpha,\alpha\in\left[  \underline{\alpha},\overline{\alpha
}\right]  \cap\mathbb{Z},\\
l_{1}\leq y_{1}\leq u_{1},\\
l_{2}\leq y_{2}\leq u_{2}.
\end{array}
\right\}  \label{Conditions describing II>=L}%
\end{equation}

\end{proposition}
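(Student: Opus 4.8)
The plan is to reduce Proposition~\ref{Prop Description of integer images >=L} directly to Corollary~\ref{Cor 3}, since the set $S$ is just the subset of $\operatorname*{IIm}\left(A\right)$ that additionally satisfies the box constraints $L\leq y\leq U$. By Corollary~\ref{Cor 3} a point $y=\left(y_{1},y_{2}\right)^{T}\in\mathbb{Z}^{2}$ lies in $\operatorname*{IIm}\left(A\right)$ if and only if $y_{2}-y_{1}\in\left[\underline{\alpha},\overline{\alpha}\right]$. First I would observe that since $y$ is required to be integer, the difference $y_{2}-y_{1}$ is itself an integer, so membership in $\left[\underline{\alpha},\overline{\alpha}\right]$ is equivalent to membership in $\left[\underline{\alpha},\overline{\alpha}\right]\cap\mathbb{Z}$. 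Writing $\alpha=y_{2}-y_{1}$, this gives exactly the first line of~(\ref{Conditions describing II>=L}), namely $y_{2}=y_{1}+\alpha$ with $\alpha\in\left[\underline{\alpha},\overline{\alpha}\right]\cap\mathbb{Z}$.

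Second, I would account for the box constraints. The condition $L\leq y\leq U$ in coordinates is precisely $l_{1}\leq y_{1}\leq u_{1}$ together with $l_{2}\leq y_{2}\leq u_{2}$, which are the remaining two lines of~(\ref{Conditions describing II>=L}). Since we have taken $L\in\mathbb{Z}^{2}$ (replacing $L$ by $\left\lceil L\right\rceil$ if necessary, which does not change the integer points satisfying $y\geq L$), no rounding subtleties arise. Combining the two observations, a point $y$ belongs to $S$ if and only if $y\in\mathbb{Z}^{2}$, $y_{2}-y_{1}\in\left[\underline{\alpha},\overline{\alpha}\right]\cap\mathbb{Z}$, and the two box inequalities hold --- which is exactly the system~(\ref{Conditions describing II>=L}).

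Geometrically, each admissible value of $\alpha$ picks out one line $y_{2}=y_{1}+\alpha$ of slope $1$ in the plane, and the box constraints cut out a segment of integer points on that line; as $\alpha$ ranges over the finitely many integers in $\left[\underline{\alpha},\overline{\alpha}\right]$, these segments are parallel and adjacent, matching the description already announced after the statement of P4. This confirms that $S$ consists precisely of the integer points on the indicated family of line segments.

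I do not expect any genuine obstacle here, as the result is an immediate restriction of Corollary~\ref{Cor 3}; the only point requiring a word of care is the reduction of $\left[\underline{\alpha},\overline{\alpha}\right]$ to $\left[\underline{\alpha},\overline{\alpha}\right]\cap\mathbb{Z}$, which is justified solely because $y_{1}$ and $y_{2}$ are integers and hence so is their difference. Accordingly the proof can be stated very briefly, essentially by invoking Corollary~\ref{Cor 3} and adjoining the componentwise form of the bounds $L\leq y\leq U$.
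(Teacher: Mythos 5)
Your argument is correct and matches the paper exactly: the paper offers no separate proof, stating only that the proposition ``follows from Corollary \ref{Cor 3} immediately,'' and your write-up simply spells out that immediate deduction (integrality of $y$ forces $y_{2}-y_{1}\in\mathbb{Z}$, and $L\leq y\leq U$ is rewritten componentwise). Nothing further is needed.
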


\textbf{Problem P5:} Given $A\in\mathbb{R}^{2\times n}$ and $L\in
\mathbb{R}^{2},$ find an $x\in\mathbb{R}^{n}$ satisfying the following
conditions:%
\begin{equation}
\left.
\begin{array}
[c]{c}%
Ax\in\mathbb{Z}^{2}\\
x\leq0\\
Ax\geq L
\end{array}
\right\}  \label{Pr.5 conditions}%
\end{equation}
or decide that there is none.

\bigskip In order to solve P5 we first prove a few auxiliary statements.

A set $S\subseteq\mathbb{R}^{n}$ is called \textit{max-convex} if $\lambda
x\oplus\mu y\in S$ for any $x,y\in S$ and $\lambda,\mu\in\mathbb{R}$
satisfying $\lambda\oplus\mu=0.$

\begin{proposition}
\label{Prop S is max convex}Let $A\in\mathbb{R}^{m\times n}$ and
$d\in\mathbb{R}^{n}.$ Then the sets%
\[
S=\left\{  x\in\mathbb{R}^{n}:Ax=b\right\}
\]
and%
\[
S^{\prime}=\left\{  x\in\mathbb{R}^{n}:Ax=b,x\leq d\right\}
\]
are max-convex.
\end{proposition}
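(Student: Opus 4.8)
The plan is to verify the defining condition of max-convexity directly for each set, exploiting the linearity (in the max-algebraic sense) of the mapping $x\mapsto Ax$. The key algebraic fact I would establish first is that $A$ distributes over max-combinations of the required type: for any $x,y\in\mathbb{R}^{n}$ and scalars $\lambda,\mu\in\mathbb{R}$ with $\lambda\oplus\mu=0$, one has $A(\lambda x\oplus\mu y)=\lambda(Ax)\oplus\mu(Ay)$. This is immediate from associativity, distributivity, and the fact that scalar multiplication commutes with matrix multiplication in max-algebra (all of which are noted as standard in Section \ref{Sec Prelim}).

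For the set $S$, I would take arbitrary $x,y\in S$, so that $Ax=b$ and $Ay=b$, together with $\lambda,\mu\in\mathbb{R}$ satisfying $\lambda\oplus\mu=0$. Writing $z=\lambda x\oplus\mu y$ and applying the distributivity identity above gives
\[
Az=A(\lambda x\oplus\mu y)=\lambda(Ax)\oplus\mu(Ay)=\lambda b\oplus\mu b=(\lambda\oplus\mu)b=0\otimes b=b.
\]
Hence $z\in S$, which establishes max-convexity of $S$.

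For the set $S^{\prime}$ I would reuse this computation to get $Az=b$, and then separately check that the inequality constraint is preserved, namely $z=\lambda x\oplus\mu y\leq d$. Since $x\leq d$ and $y\leq d$, isotonicity (\ref{Isotonicity}) yields $\lambda x\leq\lambda d$ and $\mu y\leq\mu d$, and taking the componentwise maximum gives $z\leq\lambda d\oplus\mu d=(\lambda\oplus\mu)d=d$. Therefore $z\in S^{\prime}$ and $S^{\prime}$ is max-convex as well.

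I do not anticipate a genuine obstacle here: the statement is essentially a linearity-plus-isotonicity bookkeeping argument, and the only point requiring a little care is the clean handling of the scalar identity $\lambda\oplus\mu=0$ (equivalently $\max(\lambda,\mu)=0$ in conventional notation), which is what collapses $\lambda b\oplus\mu b$ back to $b$ and $\lambda d\oplus\mu d$ back to $d$. One minor subtlety worth stating explicitly is that because $A$ and $d$ are finite and $\lambda,\mu\in\mathbb{R}$, the vector $z$ lies in $\mathbb{R}^{n}$ rather than $\overline{\mathbb{R}}^{n}$, so both $S$ and $S^{\prime}$ are genuinely subsets of $\mathbb{R}^{n}$ and the claimed membership is well defined.
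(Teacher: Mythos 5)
Your proof is correct and follows essentially the same route as the paper's: both verify $A(\lambda x\oplus\mu y)=\lambda b\oplus\mu b=(\lambda\oplus\mu)b=b$ by max-linearity and then use isotonicity to get $\lambda x\oplus\mu y\leq(\lambda\oplus\mu)d=d$. The extra remarks on finiteness are fine but not needed.
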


\begin{proof}
Since $\left(  \overline{\mathbb{R}},\oplus,\otimes\right)  $ is a semifield
the proof follows the lines of the proofs of the corresponding conventional
statements:%
\begin{align*}
A\left(  \lambda x\oplus\mu y\right)   &  =\lambda Ax\oplus\mu Ay\\
&  =\lambda b\oplus\mu b\\
&  =\left(  \lambda\oplus\mu\right)  b\\
&  =0b=b
\end{align*}
and
\begin{align*}
\lambda x\oplus\mu y  &  \leq\lambda d\oplus\mu d\\
&  =\left(  \lambda\oplus\mu\right)  d\\
&  =0d=d.
\end{align*}

\end{proof}

\begin{proposition}
\label{Prop Every value attained}Let $S\subseteq\mathbb{R}^{n}$ be a
max-convex set and $f\left(  x\right)  =c^{T}x,$ where $c\in\mathbb{R}^{n}.$
If $f\left(  x\right)  \leq f\left(  y\right)  $ for some $x,y\in S$ then for
every $f\in\left[  f\left(  x\right)  ,f\left(  y\right)  \right]  $ there
exists a $z\in S$ such that $f\left(  z\right)  =f.$
\end{proposition}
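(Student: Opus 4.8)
The plan is to exploit the fact that, under the paper's convention, $f(x)=c^{T}x=\bigoplus_{i\in N}c_{i}\otimes x_{i}=\max_{i\in N}(c_{i}+x_{i})$ is a \emph{max-linear} functional, so that the single scalar parametrising the max-convex combinations of $x$ and $y$ can be tuned to hit any prescribed value in the interval. First I would record the max-linearity identity $f(\lambda u\oplus\mu v)=\lambda f(u)\oplus\mu f(v)$ for all $u,v\in\mathbb{R}^{n}$ and $\lambda,\mu\in\mathbb{R}$; this is immediate from commutativity, associativity and distributivity of $(\oplus,\otimes)$, exactly in the spirit of the computation in the proof of Proposition \ref{Prop S is max convex}, only with a row vector $c^{T}$ in place of the matrix $A$.

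Write $a=f(x)$ and $b=f(y)$, so the hypothesis is $a\le b$. Given a target value $v\in[a,b]$, I would set $\lambda=0$ and $\mu=v-b$. Since $a\le v\le b$ we have $\mu=v-b\le 0$, hence $\lambda\oplus\mu=\max(0,\mu)=0$, and max-convexity of $S$ yields $z:=x\oplus\mu y=\lambda x\oplus\mu y\in S$. Evaluating via max-linearity, $f(z)=\lambda f(x)\oplus\mu f(y)=\max(a,\,\mu+b)=\max(a,v)=v$, the last equality because $v\ge a$. This exhibits the required $z\in S$ with $f(z)=v$; the endpoints $v=a$ and $v=b$ are of course also covered directly by $z=x$ and $z=y$.

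The only step carrying real content is the choice of the scalars $(\lambda,\mu)$: one must keep $\lambda\oplus\mu=0$, so that the point is guaranteed to lie in $S$ by max-convexity, while simultaneously forcing the maximum in $f(z)=\max(f(x),\,\mu+f(y))$ to resolve to the branch $\mu+b$. Once max-linearity is in hand this is pure bookkeeping, and there is no genuine analytic obstacle.

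The point that does require care is the reading of $c^{T}x$. Under the stated convention it denotes the max-algebraic product above, and the one-line computation applies. Had $f$ instead been the \emph{ordinary} linear form $\sum_{i}c_{i}x_{i}$, max-linearity would fail and I would argue topologically: the path $\theta\mapsto\theta x\oplus y$ for $\theta\le 0$, glued at $\theta=0$ to $\theta\mapsto x\oplus(-\theta)y$ for $\theta\ge 0$, stays in $S$ by max-convexity, is continuous in $\theta$, and tends to $y$ as $\theta\to-\infty$ and to $x$ as $\theta\to+\infty$; the intermediate value theorem applied to the continuous real map $\theta\mapsto f(\theta x\oplus y)$ (respectively $f(x\oplus(-\theta)y)$) along this path would then deliver every value in $[f(x),f(y)]$. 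Deciding which of these two proofs is the intended one is the main thing to settle before writing out the details.
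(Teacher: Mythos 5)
Your proof is correct and is essentially identical to the paper's: the paper also takes $\lambda=0$, $\mu=f-f(y)\leq 0$, forms $z=\lambda x\oplus\mu y\in S$ by max-convexity, and evaluates $f(z)=\lambda f(x)\oplus\mu f(y)=f(x)\oplus f=f$ using max-linearity of $c^{T}x$. Your reading of $c^{T}x$ as the max-algebraic product is indeed the intended one, so the topological fallback is unnecessary.
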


\begin{proof}
Denote $f^{\prime}=f\left(  x\right)  $, $f^{\prime\prime}=f\left(  y\right)
$ and suppose $f\in\left[  f^{\prime},f^{\prime\prime}\right]  .$ Let
$\lambda=0,\mu=f-f^{\prime\prime}\leq0.$ Then $\lambda\oplus\mu=0$ and thus by
Proposition \ref{Prop S is max convex} $z\in S$ where $z=\lambda x\oplus\mu
y.$ Also,
\begin{align*}
f\left(  z\right)   &  =c^{T}\left(  \lambda x\oplus\mu y\right) \\
&  =\lambda c^{T}x\oplus\mu c^{T}y\\
&  =\lambda f^{\prime}\oplus\mu f^{\prime\prime}\\
&  =f^{\prime}\oplus f=f.
\end{align*}

\end{proof}

\begin{proposition}
\label{Prop F is a closed interval}Let $a,c\in\mathbb{R}^{n},b\in
\mathbb{R},f\left(  x\right)  =c^{T}x$ and%
\[
S=\left\{  x\in\mathbb{R}^{n}:a^{T}x=b\right\}  .
\]
Then the set $F=\left\{  f\left(  x\right)  :x\in S\right\}  $ is a non-empty
closed interval.
\end{proposition}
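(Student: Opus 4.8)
The plan is to show directly that $F=[p,q]$, where $q=b+\max_{j}(c_{j}-a_{j})$ and $p=b+\min_{j}(c_{j}-a_{j})$, by establishing three facts: $S$ is non-empty, the functional $f$ attains a maximum $q$ and a minimum $p$ on $S$, and $F$ contains every value in between. Throughout, recall that $a^{T}x=\max_{j}(a_{j}+x_{j})$ and $f(x)=\max_{j}(c_{j}+x_{j})$.

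First I would handle non-emptiness and the upper endpoint simultaneously. The equation $a^{T}x=b$ is a one-sided max-linear system with the single-row matrix $a^{T}$; its principal solution is $\overline{x}=(a^{T})^{\#}b$, i.e. $\overline{x}_{j}=b-a_{j}$. Since $a^{T}\overline{x}=\max_{j}(a_{j}+b-a_{j})=b$, Corollary \ref{Cor 2} gives $\overline{x}\in S$, so $S\neq\emptyset$ and hence $F\neq\emptyset$. By Proposition \ref{Prop first}(b) every $x\in S$ satisfies $a^{T}x\leq b$ and therefore $x\leq\overline{x}$; isotonicity (\ref{Isotonicity}) applied with $D=c^{T}$ then yields $f(x)=c^{T}x\leq c^{T}\overline{x}$. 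Thus $q:=f(\overline{x})=\max_{j}(c_{j}+b-a_{j})=b+\max_{j}(c_{j}-a_{j})$ is the maximum of $f$ on $S$.

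Next I would treat the lower endpoint. Membership $x\in S$ is equivalent to the conditions $x_{j}\leq b-a_{j}$ for all $j$ together with $x_{k}=b-a_{k}$ for at least one index $k$ (the two halves of $\max_{j}(a_{j}+x_{j})=b$). For such a tight index, $f(x)\geq c_{k}+x_{k}=c_{k}+b-a_{k}\geq p:=b+\min_{j}(c_{j}-a_{j})$, so $p$ is a lower bound for $F$. To see it is attained, fix $k$ minimizing $c_{j}-a_{j}$, set $x_{k}=b-a_{k}$, and put $x_{j}=\min(b-a_{j},\,p-c_{j})$ for $j\neq k$; then $a_{j}+x_{j}\leq b$ with equality at $j=k$, so $x\in S$, while $c_{j}+x_{j}\leq p$ for $j\neq k$ and $c_{k}+x_{k}=p$, giving $f(x)=p$.

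Finally I would obtain convexity of the value set. The set $S$ is max-convex by Proposition \ref{Prop S is max convex} (with the single-row matrix $a^{T}$), so Proposition \ref{Prop Every value attained} applies to the linear functional $f$: since $p,q\in F$ with $p\leq q$, every value of $[p,q]$ is attained on $S$. Combined with the bounds $p\leq f(x)\leq q$ established above, this gives $F=[p,q]$, a non-empty closed interval. The only step requiring a little care is the construction at the lower endpoint, namely pushing the non-tight coordinates down far enough to keep their contributions at most $p$ while respecting $x_{j}\leq b-a_{j}$; but because $S$ imposes no lower bound on the coordinates, the choice $x_{j}=\min(b-a_{j},\,p-c_{j})$ always works, so this presents no real obstacle.
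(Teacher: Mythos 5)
Your proof is correct and follows essentially the same route as the paper's: the principal solution $\overline{x}=(b-a_1,\dots,b-a_n)^T$ gives the attained maximum, an explicit construction at a tight coordinate gives the attained minimum $b+\min_j(c_j-a_j)$, and max-convexity together with Proposition \ref{Prop Every value attained} fills in the interval. If anything, your construction for the lower endpoint, taking $x_j=\min(b-a_j,\,p-c_j)$, is slightly more careful than the paper's $x^{(k)}$, which only requires $x_j^{(k)}\leq c_k+\overline{x}_k-c_j$ and leaves implicit the additional constraint $x_j^{(k)}\leq b-a_j$ needed for membership in $S$.
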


\begin{proof}
Let us denote $a=\left(  a_{1},...,a_{n}\right)  ^{T}$ and $c=\left(
c_{1},...,c_{n}\right)  ^{T}.$ Let $A$ be the $1\times n$ matrix $A=\left(
a_{1},...,a_{n}\right)  $ and as before $\overline{x}=A^{\#}b=\left(
b-a_{1},...,b-a_{n}\right)  ^{T}$. Then clearly $\overline{x}\in S$ and
$x\leq\overline{x}$ for any $x\in S$ by Proposition \ref{Prop first}. It
follows by isotonicity that $f\left(  x\right)  \leq f\left(  \overline
{x}\right)  $ for every $x\in S$ and so $f\left(  \overline{x}\right)  $ is an
upper bound of $F$ attained on $S$.

On the other hand, define for $k=1,...,n:$%
\[
x^{\left(  k\right)  }=\left(  x_{1}^{\left(  k\right)  },...,x_{k-1}^{\left(
k\right)  },x_{k}^{\left(  k\right)  },x_{k+1}^{\left(  k\right)  }%
,...,x_{n}^{\left(  k\right)  }\right)  ^{T},
\]
where $x_{k}^{\left(  k\right)  }=\overline{x}_{k}=b-a_{k}$ and $x_{j}%
^{\left(  k\right)  }$ is any value not exceeding $c_{k}+\overline{x}%
_{k}-c_{j}$ for $j\neq k$ . Hence $f\left(  x^{\left(  k\right)  }\right)
=c_{k}+\overline{x}_{k}.$ For every $x\in S$ there exists a $k\in N$ such that
$x_{k}=\overline{x}_{k}$ and for this $k$ we have:
\[
f\left(  x\right)  \geq c_{k}+\overline{x}_{k}=f\left(  x^{\left(  k\right)
}\right)  \geq\min_{j\in N}f\left(  x^{\left(  j\right)  }\right)  .
\]
Since $\min_{j\in N}f\left(  x^{\left(  j\right)  }\right)  =f\left(
x^{\left(  j_{0}\right)  }\right)  $ for some $j_{0}\in N$ and $x^{\left(
j_{0}\right)  }\in S$ we have that $F$ has a lower bound and this bound is
attained on $S.$ The statement now follows from Propositions
\ref{Prop S is max convex} and \ref{Prop Every value attained}.
\end{proof}

\begin{corollary}
\bigskip\label{Cor Every value attained for x<=d}Let $a,c\in\mathbb{R}%
^{n},b\in\mathbb{R},f\left(  x\right)  =c^{T}x,d\in\mathbb{R}^{n}$ and%
\[
S^{\prime}=\left\{  x\in\mathbb{R}^{n}:a^{T}x=b,x\leq d\right\}  .
\]
If $S^{\prime}\neq\emptyset$ then the set $F=\left\{  f\left(  x\right)  :x\in
S^{\prime}\right\}  $ is a (non-empty) closed interval.
\end{corollary}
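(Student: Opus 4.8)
The plan is to reduce the constrained problem to the unconstrained one already handled in Proposition \ref{Prop F is a closed interval}, exploiting Proposition \ref{Prop second}, which converts the system $a^{T}x=b,\,x\leq d$ into an equivalent unconstrained system. First I would invoke Proposition \ref{Prop second} in the special case $m=1$: with $A$ the $1\times n$ matrix $\left(a_{1},\dots,a_{n}\right)$ and $\overline{x}=A^{\#}b$ as before, the set $S^{\prime}=\{x:a^{T}x=b,\,x\leq d\}$ is non-empty if and only if $a^{T}z=b$, where $z=d\oplus^{\prime}\overline{x}$. Since we are given $S^{\prime}\neq\emptyset$, this $z$ satisfies $a^{T}z=b$ and $z\leq d$, so $z\in S^{\prime}$ and $F\neq\emptyset$.

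The main idea is then that $z$ serves as the greatest element of $S^{\prime}$, just as $\overline{x}$ is the greatest element of the unconstrained solution set $S$. Indeed, any $x\in S^{\prime}$ satisfies both $x\leq\overline{x}$ (by Proposition \ref{Prop first}(b)) and $x\leq d$, hence $x\leq d\oplus^{\prime}\overline{x}=z$. By isotonicity applied to the linear functional $f(x)=c^{T}x$ (here one must note $c\in\mathbb{R}^{n}$ is finite, so each coordinate genuinely contributes monotonically), we obtain $f(x)\leq f(z)$ for all $x\in S^{\prime}$; thus $f(z)$ is an upper bound of $F$ that is attained on $S^{\prime}$.

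For the lower bound I would mimic the construction in Proposition \ref{Prop F is a closed interval}, but anchored at $z$ instead of at $\overline{x}$. For each $k$ with $z_{k}=\overline{x}_{k}$ (equivalently, for each active index of $z$ as a solution of $a^{T}x=b$), define a vector agreeing with $z$ in the $k$-th coordinate and taking suitably small values elsewhere, so as to minimise $f$ while remaining in $S^{\prime}$; the minimum of $f$ over these finitely many extremal vectors furnishes a lower bound attained on $S^{\prime}$. Finally, observe from Proposition \ref{Prop S is max convex} that $S^{\prime}$ is max-convex, so Proposition \ref{Prop Every value attained} guarantees every intermediate value between the attained lower and upper bounds is realised; hence $F$ is a closed interval.

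The step I expect to require the most care is the lower-bound construction: one must verify that the small-coordinate vectors used to drive $f$ downward still lie in $S^{\prime}$, i.e. that lowering the off-diagonal coordinates preserves \emph{both} the equation $a^{T}x=b$ (this holds because the coordinate where the max is attained is pinned at $z_{k}$) and the constraint $x\leq d$ (which holds automatically since $z\leq d$ and we only decrease the remaining coordinates). Once these feasibility checks are in place the conclusion follows immediately from the cited propositions, so the bulk of the argument is genuinely a routine adaptation of the unconstrained case.
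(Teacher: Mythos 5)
Your proposal is correct and follows essentially the same route as the paper, which simply replaces $\overline{x}$ by $z=\overline{x}\oplus^{\prime}d$ and reruns the argument of Proposition \ref{Prop F is a closed interval}; you supply the details the paper leaves implicit (that $z$ is the greatest element of $S^{\prime}$, and that the lower-bound construction must be restricted to the active indices $k$ with $z_{k}=\overline{x}_{k}$, which suffices because any $x\in S^{\prime}$ attains $x_{k}=\overline{x}_{k}$ only at such an index). Your feasibility checks for the extremal vectors are the right ones, so the argument is complete.
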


\begin{proof}
If $S^{\prime}\neq\emptyset$ then $\overline{x}_{j}\leq d_{j}$ for at least
one $j\in N.$ The rest of the proof follows the lines of the proof of
Proposition \ref{Prop F is a closed interval} where $\overline{x}$ is replaced
by $\overline{x}\oplus^{\prime}d.$ See Proposition \ref{Prop second}.
\end{proof}

\bigskip Let us return to P5. We denote by $X$ the set of vectors $x$
satisfying (\ref{Pr.5 conditions}). Note that we may assume \ without loss of
generality that $L=\left(  l_{1},l_{2}\right)  ^{T}\in\mathbb{Z}^{2}$
(otherwise we replace $L$ by $\left\lceil L\right\rceil $). By isotonicity we
have $Ax\leq A0$ for every $x\in X$ and we denote $A0$ by $U=\left(
u_{1},u_{2}\right)  ^{T}.$ If $L\leq U$ is not satisfied then $X=\emptyset$
hence we will assume in what follows that $L\leq U.$ So the task is to find
integer points $y=\left(  y_{1},y_{2}\right)  ^{T}$ in the rectangle $L\leq
y\leq U$ of the form $y=Ax,x\leq0$ or to decide that there are none. For ease
of reference we will denote%
\[
T=\left\{  y\in\mathbb{R}^{2}:L\leq y\leq U\right\}  .
\]
Recall that the integer points of the form $y=Ax$ in this rectangle are
described by (\ref{Conditions describing II>=L}). A little bit more
challenging is the task to identify those of them (if any) that are of the
form $y=Ax$ where $x\leq0.$

First we observe in the following statement that every integer point in $T$
(if any) can be "diagonally projected" on the left-hand side or bottom side of
$T.$

\begin{proposition}
\label{Prop Reflection of II in T}If $x\in X$ then there exists $\lambda\leq0$
such that the vector $x^{\prime}=\lambda x$ is in $X$ and satisfies either
$\left(  Ax^{\prime}\right)  _{1}=l_{1}$ or $\left(  Ax^{\prime}\right)
_{2}=l_{2}.$
\end{proposition}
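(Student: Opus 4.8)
The plan is to exploit the fact that max-algebraic scalar multiplication acts on the image as a translation along the diagonal. First I would record the elementary identity that for any $\lambda\in\mathbb{R}$ and $x\in\mathbb{R}^{n}$ we have $A(\lambda x)=\lambda(Ax)$; written out component-wise this says $\left(A(\lambda x)\right)_{i}=\lambda+(Ax)_{i}$. Thus, writing $y=Ax=\left(y_{1},y_{2}\right)^{T}$, scaling $x$ by $\lambda$ merely shifts the point $y$ to $\left(y_{1}+\lambda,y_{2}+\lambda\right)^{T}$. Decreasing $\lambda$ from $0$ moves $y$ down and to the left along a line of slope $1$, until it meets the left side $\{y_{1}=l_{1}\}$ or the bottom side $\{y_{2}=l_{2}\}$ of $T$; this is precisely the ``diagonal projection'' described in the statement.

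Next I would produce the explicit scalar. Setting
\[
\lambda=\max\left(l_{1}-y_{1},\,l_{2}-y_{2}\right),
\]
I expect this to be the right choice: it is exactly the amount by which $y$ may be shifted down the diagonal before leaving $T$ through its left or bottom side.

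The remainder is verification that $x^{\prime}=\lambda x$ lies in $X$ and realises one of the two boundary equalities. Since $x\in X$ gives $Ax\geq L$, i.e.\ $y_{1}\geq l_{1}$ and $y_{2}\geq l_{2}$, both arguments of the maximum are $\leq0$, whence $\lambda\leq0$; combined with $x\leq0$ this yields $x^{\prime}=\lambda x\leq0$. The integrality step uses $x\in X$ (so $y\in\mathbb{Z}^{2}$) together with the standing assumption $L\in\mathbb{Z}^{2}$: both $l_{1}-y_{1}$ and $l_{2}-y_{2}$ are integers, hence $\lambda\in\mathbb{Z}$, and therefore $A(x^{\prime})=\left(y_{1}+\lambda,y_{2}+\lambda\right)^{T}\in\mathbb{Z}^{2}$. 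By definition of the maximum, $\lambda\geq l_{1}-y_{1}$ and $\lambda\geq l_{2}-y_{2}$, i.e.\ $A(x^{\prime})\geq L$, completing the check that $x^{\prime}\in X$. Finally $\lambda$ equals one of the two quantities $l_{i}-y_{i}$, and for that index $\left(A(x^{\prime})\right)_{i}=y_{i}+\lambda=l_{i}$, giving the required equality.

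The argument is essentially bookkeeping, so I do not anticipate a serious obstacle; the single point needing care is the integrality of $\lambda$, which is exactly where the reduction to $L\in\mathbb{Z}^{2}$ (made just before the proposition) and the membership $Ax\in\mathbb{Z}^{2}$ are both indispensable. Were either dropped, the projected point could land strictly between lattice points and so fall outside the integer image, and I would flag this as the conceptually load-bearing step.
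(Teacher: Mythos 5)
Your proof is correct and follows essentially the same route as the paper: the paper's two cases ($b_2-b_1\geq l_2-l_1$ with $\lambda=l_1-b_1$, and the reverse with $\lambda=l_2-b_2$) are exactly your single formula $\lambda=\max\left(l_1-y_1,\,l_2-y_2\right)$ unpacked, and the verifications (sign of $\lambda$, integrality via $L\in\mathbb{Z}^2$ and $Ax\in\mathbb{Z}^2$, and the boundary equality) coincide. No gaps.
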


\begin{proof}
Let us denote $Ax$ by $b=\left(  b_{1},b_{2}\right)  ^{T}.$

Suppose first that $b_{2}-b_{1}\geq l_{2}-l_{1}.$ Then take $\lambda
=l_{1}-b_{1}\leq0$ and observe%
\[
A\left(  \lambda x\right)  =\lambda\left(  Ax\right)  =\lambda b=\left(
l_{1},b_{2}+l_{1}-b_{1}\right)  \geq L.
\]
Clearly, $Ax^{\prime}=\lambda b\in\mathbb{Z}^{2}$ since both $\lambda
\in\mathbb{Z}$ and $b\in\mathbb{Z}^{2}.$ Also $x^{\prime}=\lambda x\leq0$,
thus $x^{\prime}\in X$ and $\left(  Ax^{\prime}\right)  _{1}=l_{1}$.

The case $b_{2}-b_{1}\leq l_{2}-l_{1}$ is proved similarly by taking
$\lambda=l_{2}-b_{2}\leq0.$
\end{proof}

\bigskip

We will also use the diagonal projection of the point $U$ (although it is not
integer) on the left-hand side or bottom side of $T$. For this we will need to
distinguish two possibilities to which we will refer as Case 1 and Case 2:

Case 1: $u_{2}-u_{1}\geq l_{2}-l_{1}.$

\bigskip Case 2: $u_{2}-u_{1}\leq l_{2}-l_{1}.$

Under the assumption of Case 1 the diagonal projection of $U$ is $P=\left(
p_{1},p_{2}\right)  ^{T}=\left(  l_{1},l_{1}+u_{2}-u_{1}\right)  ^{T}.$ In
Case 2 it is $P^{\prime}=\left(  l_{2}+u_{1}-u_{2},l_{2}\right)  ^{T}.$

Due to Proposition \ref{Prop Reflection of II in T} it is sufficient to search
integer points $Ax$ satisfying $\left(  Ax\right)  _{1}=l_{1}$ or $\left(
Ax\right)  _{2}=l_{2}$ and find those (if any) for which the condition
$x\leq0$ is satisfied. As there is possibly a non-polynomial number of such
points we will narrow the set of candidates. All candidates have the form
$\left(  l_{1},l_{1}+\alpha\right)  $ or $\left(  l_{2}-\alpha,l_{2}\right)
,$ where $\alpha\in\left[  \underline{\alpha}\left(  A\right)  ,\overline
{\alpha}\left(  A\right)  \right]  \cap\mathbb{Z}$ by Corollary \ref{Cor 3}.

Let us denote%
\begin{align*}
S &  =\left\{  Ax\in T:x\leq0\right\}  ,\\
S_{1} &  =\left\{  Ax\in T:x\leq0,\left(  Ax\right)  _{1}=l_{1}\right\}
\end{align*}
and%
\[
S_{2}=\left\{  Ax\in T:x\leq0,\left(  Ax\right)  _{2}=l_{2}\right\}  .
\]
Clearly, $U\in S$ and $P\in S_{1}$ in Case 1 and $P^{\prime}\in S_{2}$ in Case
2. We can also describe $S_{1}$ and $S_{2}$ as follows:%
\[
S_{1}=\left\{  \left(  l_{1},\left(  Ax\right)  _{2}\right)  ^{T}%
:x\leq0,\left(  Ax\right)  _{1}=l_{1}\right\}  \cap\left\{  \left(
l_{1},y_{2}\right)  :y_{2}\geq l_{2}\right\}
\]
and%
\[
S_{2}=\left\{  \left(  \left(  Ax\right)  _{1},l_{2}\right)  ^{T}%
:x\leq0,\left(  Ax\right)  _{2}=l_{2}\right\}  \cap\left\{  \left(
y_{1},l_{2}\right)  :y_{1}\geq l_{1}\right\}  .
\]
By Corollary \ref{Cor Every value attained for x<=d} the set $\left\{  \left(
l_{1},\left(  Ax\right)  _{2}\right)  ^{T}:x\leq0,\left(  Ax\right)
_{1}=l_{1}\right\}  $ is a closed interval or $\emptyset$ and so $S_{1}$ is a
closed interval or $\emptyset$, similarly $S_{2}.$ The point $P$ is in $S_{1}$
in Case 1 and $P^{\prime}$ in $S_{2}$ in Case 2, so at least one of the sets
$S_{1}$ and $S_{2}$ is non-empty in each case.

Consider now Case 1. Since the point $P=\left(  l_{1},l_{1}+u_{2}%
-u_{1}\right)  ^{T}$ is in $S_{1}\cup S_{2}$ it is easy to check whether a
point of the form $\left(  l_{1},l_{1}+\alpha\right)  $ or $\left(
l_{2}-\alpha,l_{2}\right)  $ is in $S_{1}\cup S_{2}$ where $\alpha\in\left[
\underline{\alpha}\left(  A\right)  ,\overline{\alpha}\left(  A\right)
\right]  \cap\mathbb{Z}$ - we only need to check those points of this form
that are closest to $P.$ More precisely, we distinguish 3 subcases:

\begin{itemize}
\item Subcase 1a: If $u_{2}-u_{1}<$ $\underline{\alpha}\left(  A\right)  $
then check the point $C_{1}=\left(  l_{1},l_{1}+\underline{\alpha}\left(
A\right)  \right)  ^{T}.$ Note that $\left(  l_{1},l_{1}+\underline{\alpha
}\left(  A\right)  \right)  ^{T}\geq L.$

\item Subcase 1b: If $u_{2}-u_{1}>$ $\overline{\alpha}\left(  A\right)  $ then
check the point $C_{2}=\left(  l_{1},l_{1}+\overline{\alpha}\left(  A\right)
\right)  ^{T}.$ If $l_{1}+\overline{\alpha}\left(  A\right)  <l_{2}$ then the
checkpoint is at the bottom side of $T,$ that is the point $\left(
l_{2}-\overline{\alpha}\left(  A\right)  ,l_{2}\right)  ^{T}\geq L.$

\item Subcase 1c: If $\underline{\alpha}\left(  A\right)  \leq u_{2}-u_{1}%
\leq$ $\overline{\alpha}\left(  A\right)  $ then check both
\[
C_{3}=\left(  l_{1},l_{1}+\left\lfloor u_{2}-u_{1}\right\rfloor \right)  ^{T}%
\]
and
\[
C_{4}=\left(  l_{1},l_{1}+\left\lceil u_{2}-u_{1}\right\rceil \right)  ^{T}.
\]
Note that both these points are $\geq L.$
\end{itemize}

Case 2 is treated similarly.

"Checking" a point $y$ means verifying that there is an $x\leq0$ for which
$Ax=y.$ By Proposition \ref{Prop second} this can be done by checking that
$A(\overline{x}\oplus^{\prime}0)=y$ which is $O\left(  n\right)  .$ Finding
$\underline{\alpha}\left(  A\right)  ,\overline{\alpha}\left(  A\right)  $ and
$U$ is obviously $O\left(  n\right)  $ as well so the whole method is
$O\left(  n\right)  .$

\textbf{Problem P6:} Given $A\in\mathbb{R}^{3\times n}$ find an $x\in
\mathbb{R}^{n}$ such that $Ax\in\operatorname*{IIm}\left(  A\right)  $ or
decide there is none.

\begin{remark}
\label{Rem 1}\bigskip Since $\left(  AD\right)  x=A\left(  Dx\right)  $ for
any $D=diag\left(  d_{1},...,d_{n}\right)  \in\overline{\mathbb{R}}^{n\times
n}$ we have $\operatorname*{IIm}\left(  AD\right)  =\operatorname*{IIm}\left(
A\right)  .$ Therefore in Problem 6 we may assume without loss of generality
that $a_{3j}=0$ for all $j\in N$ by taking $d_{j}=-a_{3j}$ for $j\in N$ if necessary.
\end{remark}

\begin{remark}
\label{Rem 2}Since $\left(  \alpha A\right)  x=A\left(  \alpha x\right)  $ for
any $\alpha\in\mathbb{R}$ we have $\operatorname*{IIm}\left(  \alpha A\right)
=\operatorname*{IIm}\left(  A\right)  .$ Therefore in Problem 6 we may assume
without loss of generality that for every $i\in\left\{  1,2,3\right\}  $ there
is an $x\in\mathbb{R}^{n}$ such that $\left(  Ax\right)  _{i}=0$ if
$\operatorname*{IIm}\left(  A\right)  \neq\emptyset$ by taking $\alpha
=-\left(  Ax\right)  _{i}$ if necessary.
\end{remark}

\begin{remark}
\label{Rem 3}Since $\alpha\left(  Ax\right)  =A\left(  \alpha x\right)  $ for
any $\alpha\in\mathbb{R}$ we have $A\left(  \alpha x\right)  \in\mathbb{Z}%
^{m}$ if $Ax\in\mathbb{Z}^{m}$ and $\alpha\in\mathbb{Z}.$ Therefore in Problem
6 we may assume without loss of generality that for every $j\in N$ there is an
$x\in\mathbb{R}^{n}$ satisfying $Ax\in\mathbb{Z}^{m}$ and $\left\lfloor
x_{j}\right\rfloor =0$ whenever $\operatorname*{IIm}\left(  A\right)
\neq\emptyset$ by taking $\alpha=-\left\lfloor x_{j}\right\rfloor $ if necessary.
\end{remark}

\bigskip We are now ready to present the main result of this paper - a
solution method for Problem 6 for column typical matrices. So let
$A\in\mathbb{R}^{3\times n}$ be a column typical matrix. We assume without
loss of generality (see Remark \ref{Rem 1}) that $a_{3j}=0$ for all $j\in N.$
Suppose that $Ax\in\mathbb{Z}^{3}$ for some $x\in\mathbb{R}^{n}$ and again
without loss of generality (see Remark \ref{Rem 2}) that $\left(  Ax\right)
_{3}=0.$ Hence there is a $k\in N$ such that $x_{k}=0\geq x_{j}$ for every
$j\in N.$ Let $\overline{A}$ be the matrix obtained from $A$ by removing row
3. Define%
\[
B^{\left(  k\right)  }=\left(  \overline{A}_{1},...,\overline{A}%
_{k-1},\overline{A}_{k+1},...,\overline{A}_{n}\right)
\]
and%
\[
z=\left(  x_{1},...,x_{k-1},x_{k+1},...,x_{n}\right)  ^{T}.
\]
Then%
\begin{equation}
\left.
\begin{array}
[c]{c}%
B^{\left(  k\right)  }z\in\mathbb{Z}^{2},\\
B^{\left(  k\right)  }z\geq\overline{A}_{k},\\
z\leq0.
\end{array}
\right\}  \label{Pr.6 conditions}%
\end{equation}
The first inequality follows from the assumption that $A$ is column typical
since then $a_{1k},a_{2k}$ are non-integer as they cannot have the same
fractional part as $a_{3k}$ which is zero (note that this inequality is
implicitly strict in each component).

Conversely, if a vector $z=\left(  z_{1},...,z_{n-1}\right)  ^{T}\in
\mathbb{R}^{n-1}$ satisfies (\ref{Pr.6 conditions}) then the vector
\[
x=\left(  z_{1},...,z_{k-1},0,z_{k},...,z_{n-1}\right)  ^{T}%
\]
satisfies $Ax\in\mathbb{Z}^{3}.$ So the method is to check for all $k=1,...,n$
that (\ref{Pr.6 conditions}) has a solution and find one. If every check fails
then $A$ has no integer image. Each check (for a fixed $k$) is an instance of
Problem 5 (with $A$ replaced by $B^{\left(  k\right)  }$ and $L$ is replaced
by $\overline{A}_{k}$), which can be solved in $O\left(  n\right)  $\ time, so
in total Problem 6 can be solved in $nO\left(  n\right)  =O\left(
n^{2}\right)  $ time.

We conjecture that the above mentioned method for solving Problem 6 in the
case $m=3$ and for column typical matrices can be extended to general matrices
and any $m.$ To do this one could try to develop a methodology to decide
whether a matrix $B$ obtained from $A$ with $\operatorname*{IIm}\left(
A\right)  \neq\emptyset$ by adding a row also has $\operatorname*{IIm}\left(
B\right)  \neq\emptyset.$ In the next section we will show that this can be
done if $A\in\mathbb{R}^{m\times n}$ is column uniform.

\section{Finding an integer image of an almost column uniform matrix}

\bigskip A matrix $A\in\mathbb{R}^{m\times n}$ is called \textit{almost column
uniform} if the matrix obtained by removing one row (called
\textit{exceptional}) of $A$ is column uniform.

\textbf{Problem P7:} Given an almost column uniform matrix $A\in
\mathbb{R}^{m\times n},$ find an $x\in\mathbb{R}^{n}$ such that $Ax\in
\mathbb{Z}^{m}$ or decide that there is none.

We will show in this section how to solve P7 in polynomial time.

We may assume without loss of generality that the following is satisfied:

\begin{enumerate}
\item The exceptional row is row $m,$ that is $A$ is of the form%
\[
\binom{\overline{A}}{a_{m1}...a_{mn}}%
\]
where $\overline{A}\in\mathbb{R}^{\left(  m-1\right)  \times n}$ is column uniform.

\item $A$ does not have a uniform column (see Remark
\ref{Rem II when col uniform}), that is the following hold for every $j\in N$
and for every $r,s=1,...,m-1$ :%
\[
fr\left(  a_{rj}\right)  =fr\left(  a_{sj}\right)  \neq fr\left(
a_{mj}\right)
\]

\item $a_{mj}=0$ for all $j\in N$ (see Remark \ref{Rem 1}). Observe that the
transformation suggested in Remark \ref{Rem 1} does not affect the assumption
that the matrix is almost column uniform.
\end{enumerate}

For every $j\in N$ the common value of $fr\left(  a_{ij}\right)  $ for all
$i=1,...,m-1$ will be denoted $f_{j}.$

\bigskip Suppose $Ax\in\mathbb{Z}^{m}$ for some $x\in\mathbb{R}^{n}.$ We may
assume without loss of generality (see Remark \ref{Rem 2}) that $\left(
Ax\right)  _{m}=0.$ Hence there is a $k\in N$ such that $x_{k}=0\geq x_{j}$
for every $j\in N.$ Let us now define%
\[
B^{\left(  k\right)  }=\left(  \overline{A}_{1},...\overline{A}_{k-1}%
,\overline{A}_{k+1},...,\overline{A}_{n}\right)
\]
and%
\[
z=\left(  x_{1},...,x_{k-1},x_{k+1,}...,x_{n}\right)  ^{T}.
\]
Since all entries of $\overline{A}_{k}$ are non-integral the vector $z$
satisfies%
\begin{equation}
\left.
\begin{array}
[c]{c}%
B^{\left(  k\right)  }z\in\mathbb{Z}^{m-1},\\
B^{\left(  k\right)  }z\geq\overline{A}_{k},\\
z\leq0.
\end{array}
\right\}  \label{Pr.7 conditions}%
\end{equation}
Conversely, if a vector $z=\left(  z_{1},...,z_{n-1}\right)  ^{T}\in
\mathbb{R}^{n-1}$ satisfies (\ref{Pr.7 conditions}) then $Ax\in\mathbb{Z}%
^{m},$ where%
\[
x=\left(  z_{1},...,z_{k-1},0,z_{k+1},...,z_{n-1}\right)  ^{T}.
\]

Let us denote $F^{\left(  k\right)  }=\left(  f_{1},...,f_{k-1},f_{k+1}%
,...,f_{n}\right)  ^{T}\in\mathbb{R}^{n-1}.$

\begin{proposition}
\label{Prop.7}The system (\ref{Pr.7 conditions}) has a solution if and only if
$-F^{\left(  k\right)  }$ is a solution.
\end{proposition}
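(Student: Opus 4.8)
The plan is to show two things: first, that $-F^{(k)}$ satisfies all three conditions in (\ref{Pr.7 conditions}), and second, that if any solution exists then $-F^{(k)}$ must also be one. For the first direction I would verify each condition directly. The crucial observation is the interpretation of $-F^{(k)}$: each entry $-f_j$ is exactly the negative fractional part of the (column-uniform) column $\overline{A}_j$, so adding $-f_j$ to column $j$ clears the fractional part, making every entry of $\overline{A}_j \otimes (-f_j)$ an integer. Since the product $B^{(k)}(-F^{(k)})$ is a max of such integer-valued shifted columns, each component is a maximum of integers and hence integer; this gives $B^{(k)}(-F^{(k)}) \in \mathbb{Z}^{m-1}$. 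For the condition $z \leq 0$, I would note $f_j = fr(a_{ij}) \in [0,1)$, so $-f_j \leq 0$, giving $-F^{(k)} \leq 0$.

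The remaining condition $B^{(k)}(-F^{(k)}) \geq \overline{A}_k$ is the one deserving the most care, and I expect it to be the main obstacle. The plan is to exploit the fact that the system (\ref{Pr.7 conditions}) is \emph{assumed} to have a solution (this is the hypothesis of the "if and only if"), so there exists some $z^*$ with $B^{(k)}z^* \in \mathbb{Z}^{m-1}$, $B^{(k)}z^* \geq \overline{A}_k$, and $z^* \leq 0$. I would then compare $-F^{(k)}$ with $z^*$ componentwise. Because $B^{(k)}z^*$ is an integer vector and each entry of column $\overline{A}_j$ has fractional part $f_j$, the requirement that $z^*_j$ shift $\overline{A}_j$ to an integer forces $fr(z^*_j) = -f_j \bmod 1$, i.e. $z^*_j \in -f_j + \mathbb{Z}$. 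Combined with $z^*_j \leq 0$, the \emph{largest} admissible value of $z^*_j$ keeping integrality is exactly $-f_j$. Hence $z^* \leq -F^{(k)} \leq 0$, and by isotonicity (\ref{Isotonicity}) $B^{(k)}(-F^{(k)}) \geq B^{(k)}z^* \geq \overline{A}_k$, which establishes the inequality.

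For the converse direction of the biconditional, the argument is immediate: if $-F^{(k)}$ is a solution then trivially the system has a solution. So the substantive content is entirely in showing that $-F^{(k)}$ is the \emph{greatest} vector (under $\leq 0$) that simultaneously integralizes all columns, and that integrality plus the upper bound $z \leq 0$ pins each coordinate into the lattice coset $-f_j + \mathbb{Z}$. The key structural fact making this work is column uniformity of $\overline{A}$: it guarantees that a single scalar shift $-f_j$ per column clears the fractional part of \emph{every} entry in that column simultaneously, so there is no conflict between rows. Without uniformity one could not integralize all of column $j$ with one shift, and the clean maximality argument would break down — which is precisely why the result is stated for the column-uniform reduced matrix $\overline{A}$.
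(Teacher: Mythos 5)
Your verification that $-F^{(k)}$ satisfies the first and third conditions of (\ref{Pr.7 conditions}) is correct, and the trivial direction of the equivalence is fine. The gap is in the step establishing $B^{(k)}(-F^{(k)})\geq \overline{A}_{k}$. You claim that integrality of $B^{(k)}z^{*}$ forces $z^{*}_{j}\in -f_{j}+\mathbb{Z}$ for \emph{every} $j$, hence $z^{*}_{j}\leq -f_{j}$ and $z^{*}\leq -F^{(k)}$. This is false: the $i$-th component of $B^{(k)}z^{*}$ is $\max_{j}(h_{ij}+z^{*}_{j})$, and integrality of that maximum only constrains the columns which actually attain it in some row; a column that is never maximizing may carry an arbitrary $z^{*}_{j}$. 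Concretely, for the one-row (hence column-uniform) matrix $B^{(k)}=(10.5,\;0.3)$ and $z^{*}=(-0.5,-0.1)^{T}$ we get $B^{(k)}z^{*}=\max(10,\,0.2)=10\in\mathbb{Z}$ and $z^{*}\leq 0$, yet $z^{*}_{2}=-0.1>-0.3=-f_{2}$, so $z^{*}\not\leq -F^{(k)}$. The asserted pointwise domination, and with it the isotonicity step, therefore collapses; $-F^{(k)}$ is \emph{not} in general the greatest solution coordinatewise.

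The proposition is nevertheless true, and the repair is exactly the paper's route via Lemma \ref{L3}: since $h_{ij}-f_{j}=\left\lfloor h_{ij}\right\rfloor$, one has $\left(  B^{(k)}(-F^{(k)})\right)  _{i}=\max_{j}\left\lfloor h_{ij}\right\rfloor =\left\lfloor \max_{j}h_{ij}\right\rfloor$, i.e. $B^{(k)}(-F^{(k)})=\left\lfloor B^{(k)}0\right\rfloor$. Then from $z^{*}\leq 0$ and isotonicity $B^{(k)}z^{*}\leq B^{(k)}0$, and since $B^{(k)}z^{*}$ is an integer vector and the floor is monotone,
$\overline{A}_{k}\leq B^{(k)}z^{*}=\left\lfloor B^{(k)}z^{*}\right\rfloor \leq\left\lfloor B^{(k)}0\right\rfloor =B^{(k)}(-F^{(k)})$.
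So the correct extremality statement is not that $-F^{(k)}$ dominates every solution in $\mathbb{R}^{n-1}$, but that its \emph{image} $\left\lfloor B^{(k)}0\right\rfloor$ dominates the image of every solution; comparison must happen on the image side, not the argument side.
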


We prove a few auxiliary statements before the proof of Proposition
\ref{Prop.7}.

\begin{lemma}
\label{L1}If $a,b\in\mathbb{R}$ then
\[
fr\left(  a+b\right)  =fr\left(  a\right)  +fr\left(  b\right)  \text{ if
}fr\left(  a\right)  +fr\left(  b\right)  <1
\]
and
\[
fr\left(  a+b\right)  =fr\left(  a\right)  +fr\left(  b\right)  -1\text{ if
}fr\left(  a\right)  +fr\left(  b\right)  \geq1.
\]

\end{lemma}

\begin{proof}
The statement follows immediately from the definition of the fractional part.
\end{proof}

\begin{lemma}
\label{L.2}If $a,b,y\in\mathbb{R},fr\left(  a\right)  =fr\left(  b\right)  $
then%
\[
\left\lfloor a+y\right\rfloor -a=\left\lfloor b+y\right\rfloor -b.
\]

\end{lemma}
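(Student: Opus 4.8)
The plan is to reduce everything to a single observation: the hypothesis $fr(a)=fr(b)$ forces $a-b$ to be an integer. Writing $a=\lfloor a\rfloor+fr(a)$ and $b=\lfloor b\rfloor+fr(b)$ and subtracting, the equality of fractional parts gives $a-b=\lfloor a\rfloor-\lfloor b\rfloor$, which is an integer; I set $m=a-b\in\mathbb{Z}$.

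The only other ingredient I would need is the elementary translation identity for the floor function: for every $t\in\mathbb{R}$ and every $m\in\mathbb{Z}$ one has $\lfloor t+m\rfloor=\lfloor t\rfloor+m$. This is immediate from the definition of $\lfloor\cdot\rfloor$, since adding an integer to $t$ shifts $t$ and its integer part by the same amount while leaving $fr(t)$ unchanged. Applying it with $t=b+y$ and the integer $m$ above gives $\lfloor a+y\rfloor=\lfloor(b+y)+m\rfloor=\lfloor b+y\rfloor+m$.

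Substituting this into the left-hand side of the claim and using $a=b+m$ then yields
\[
\lfloor a+y\rfloor-a=\bigl(\lfloor b+y\rfloor+m\bigr)-(b+m)=\lfloor b+y\rfloor-b,
\]
which is exactly the asserted equality, so the argument closes at once.

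As an alternative route that stays closer to the machinery just established, I could instead write $\lfloor a+y\rfloor-a=y-fr(a+y)$ and $\lfloor b+y\rfloor-b=y-fr(b+y)$, reducing the claim to $fr(a+y)=fr(b+y)$; this follows from Lemma \ref{L1}, since the value of $fr(a+y)$ given there depends on $a$ only through $fr(a)$, and $fr(a)=fr(b)$ by hypothesis. I do not expect any genuine obstacle: the statement is a routine consequence of the definition of the fractional part, and the only point needing a line of care is confirming the integer-translation property of the floor (or, in the alternative, checking that the two cases of Lemma \ref{L1} agree for $a$ and $b$).
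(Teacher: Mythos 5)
Your proposal is correct, and your primary argument takes a genuinely different route from the paper. The paper writes $\left\lfloor a+y\right\rfloor -a=y-fr\left(  a+y\right)  $ and $\left\lfloor b+y\right\rfloor -b=y-fr\left(  b+y\right)  $, then invokes Lemma \ref{L1} and splits into the two cases $fr\left(  a\right)  +fr\left(  y\right)  <1$ and $fr\left(  a\right)  +fr\left(  y\right)  \geq1$ (noting these conditions are the same for $a$ and $b$) to conclude $fr\left(  a+y\right)  =fr\left(  b+y\right)  $ --- this is precisely the ``alternative route'' you sketch at the end. Your main argument instead observes that $fr\left(  a\right)  =fr\left(  b\right)  $ forces $m=a-b=\left\lfloor a\right\rfloor -\left\lfloor b\right\rfloor \in\mathbb{Z}$ and then uses the translation identity $\left\lfloor t+m\right\rfloor =\left\lfloor t\right\rfloor +m$, so that $\left\lfloor a+y\right\rfloor -a=\left(  \left\lfloor b+y\right\rfloor +m\right)  -\left(  b+m\right)  =\left\lfloor b+y\right\rfloor -b$. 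This is cleaner: it needs no case analysis and makes Lemma \ref{L1} unnecessary for this purpose (in the paper, Lemma \ref{L1} is used only here, so your route would let one dispense with it entirely). What the paper's approach buys in exchange is an explicit formula for the common value, namely $\left\lfloor y\right\rfloor -fr\left(  a\right)  $ or $\left\lfloor y\right\rfloor -fr\left(  a\right)  -1$ according to the case, though that extra information is not exploited later. Both arguments are complete and correct.
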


\begin{proof}
For any $a,b,y\in\mathbb{R}$ we have%
\[
\left\lfloor a+y\right\rfloor -a=a+y-fr\left(  a+y\right)  -a=y-fr\left(
a+y\right)
\]
and similarly%
\[
\left\lfloor b+y\right\rfloor -b=y-fr\left(  b+y\right)  .
\]

If $fr\left(  a\right)  +fr\left(  y\right)  <1$ or, equivalently $fr\left(
b\right)  +fr\left(  y\right)  <1,$ then by Lemma \ref{L1} this implies%
\[
\left\lfloor a+y\right\rfloor -a=y-fr\left(  a\right)  -fr\left(  y\right)
=\left\lfloor y\right\rfloor -fr\left(  a\right)
\]
and similarly%
\[
\left\lfloor b+y\right\rfloor -b=\left\lfloor y\right\rfloor -fr\left(
b\right)  ,
\]
thus the statement follows.

The case $fr\left(  a\right)  +fr\left(  y\right)  \geq1$ is proved in the
same way.
\end{proof}

\begin{lemma}
\label{L3}Let $H=\left(  h_{ij}\right)  \in\mathbb{R}^{m\times n}$ be column
uniform, $w=\left(  w_{1},...,w_{n}\right)  ^{T}\in\mathbb{R}^{n}$ and
$\widetilde{w}=\left(  \widetilde{w}_{1},...,\widetilde{w}_{n}\right)  ^{T}$
be defined as follows:%
\[
\widetilde{w}_{j}=\left\lfloor h_{ij}+w_{j}\right\rfloor -h_{ij}%
\]
for any and therefore all $i\in M$ and for all $j\in N$ (see Lemma \ref{L.2}).
Then $\widetilde{w}\leq w$ and $H\widetilde{w}=\left\lfloor Hw\right\rfloor .$
\end{lemma}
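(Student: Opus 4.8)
The plan is to verify the two assertions by direct computation, the only genuine content being that the lower integer part commutes with a finite maximum. First I would record that the definition of $\widetilde{w}_{j}$ is sound: since $H$ is column uniform, the entries $h_{1j},\dots,h_{mj}$ in column $j$ all share the same fractional part, so by Lemma \ref{L.2} the quantity $\left\lfloor h_{ij}+w_{j}\right\rfloor -h_{ij}$ is independent of the choice of $i\in M.$ This is precisely the ``for any and therefore all $i$'' in the hypothesis, and it is what makes $\widetilde{w}_{j}$ well defined.

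The inequality $\widetilde{w}\leq w$ is immediate from $\left\lfloor t\right\rfloor \leq t.$ For each $j\in N$ we have $\left\lfloor h_{ij}+w_{j}\right\rfloor \leq h_{ij}+w_{j},$ and subtracting $h_{ij}$ gives $\widetilde{w}_{j}=\left\lfloor h_{ij}+w_{j}\right\rfloor -h_{ij}\leq w_{j},$ so $\widetilde{w}\leq w.$

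For the identity $H\widetilde{w}=\left\lfloor Hw\right\rfloor $ I would work one row at a time. Substituting the definition of $\widetilde{w}_{j}$ makes the $h_{ij}$ cancel, namely $h_{ij}+\widetilde{w}_{j}=\left\lfloor h_{ij}+w_{j}\right\rfloor ,$ so the max-algebraic product reads
\[
\left(  H\widetilde{w}\right)  _{i}=\max_{j\in N}\left(  h_{ij}+\widetilde
{w}_{j}\right)  =\max_{j\in N}\left\lfloor h_{ij}+w_{j}\right\rfloor ,
\]
whereas $\left\lfloor \left(  Hw\right)  _{i}\right\rfloor =\left\lfloor \max_{j\in N}\left(  h_{ij}+w_{j}\right)  \right\rfloor .$ Thus the whole statement reduces to the claim that the lower integer part commutes with a finite maximum, i.e. $\max_{j}\left\lfloor a_{j}\right\rfloor =\left\lfloor \max_{j}a_{j}\right\rfloor $ with $a_{j}=h_{ij}+w_{j}.$

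The remaining step is this commutation, which is the crux of the argument even though it is elementary; I would prove it by a two-way inequality. Monotonicity of $\left\lfloor \cdot\right\rfloor $ gives $\left\lfloor a_{j}\right\rfloor \leq\left\lfloor \max_{l}a_{l}\right\rfloor $ for every $j,$ hence $\max_{j}\left\lfloor a_{j}\right\rfloor \leq\left\lfloor \max_{j}a_{j}\right\rfloor .$ For the reverse inequality, let $j^{\ast}$ attain $\max_{j}a_{j};$ then $\left\lfloor \max_{j}a_{j}\right\rfloor =\left\lfloor a_{j^{\ast}}\right\rfloor \leq\max_{j}\left\lfloor a_{j}\right\rfloor .$ Combining the two gives equality, and hence $\left(  H\widetilde{w}\right)  _{i}=\left\lfloor \left(  Hw\right)  _{i}\right\rfloor $ for every $i\in M,$ which is the desired identity. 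The only point requiring care is the well-definedness of $\widetilde{w}$ via Lemma \ref{L.2}; everything else is routine once the floor-max commutation is isolated.
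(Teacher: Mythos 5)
Your proof is correct and follows essentially the same route as the paper: $\widetilde{w}\leq w$ from $\left\lfloor t\right\rfloor \leq t$, and $H\widetilde{w}=\left\lfloor Hw\right\rfloor$ by commuting the floor with the row maximum. The paper simply asserts the floor--max commutation without the two-way inequality you supply, so your write-up is the same argument in slightly more detail.
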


\begin{proof}
For any $j\in N$ we have%
\[
\widetilde{w}_{j}\leq h_{ij}+w_{j}-h_{ij}=w_{j}.
\]
Let $i\in M.$ Then%
\[
\left\lfloor \max_{j}\left(  h_{ij}+w_{j}\right)  \right\rfloor =\max
_{j}\left\lfloor h_{ij}+w_{j}\right\rfloor =\max_{j}\left(  h_{ij}%
+\widetilde{w}_{j}\right)  .
\]

\end{proof}

\begin{proof}
[Proof of Proposition 4.1]Suppose that $z\in\mathbb{R}^{n-1}$ satisfies
(\ref{Pr.7 conditions}). Then
\[
B^{\left(  k\right)  }0\geq B^{\left(  k\right)  }z\geq\overline{A}_{k}%
\]
and $B^{\left(  k\right)  }z\in\mathbb{Z}^{m-1}.$ By taking $H=B^{\left(
k\right)  }$ and $w=0$ in Lemma \ref{L3} we get using the notation of Lemma
\ref{L3}:%
\begin{align*}
B^{\left(  k\right)  }\widetilde{w}  &  =B^{\left(  k\right)  }\widetilde{0}%
=\left\lfloor B^{\left(  k\right)  }0\right\rfloor \geq\left\lfloor B^{\left(
k\right)  }z\right\rfloor =B^{\left(  k\right)  }z\geq\overline{A}_{k}\\
B^{\left(  k\right)  }\widetilde{w}  &  \in\mathbb{Z}^{m-1}\\
\widetilde{w}  &  \leq0.
\end{align*}
Clearly, $\widetilde{0}=-F^{\left(  k\right)  },$ which completes the proof.
\end{proof}

Proposition \ref{Prop.7} provides a simple way of solving Problem 7. Since for
every $k\in N$ we have $-F^{\left(  k\right)  }\leq0$ and $B^{\left(
k\right)  }\left(  -F^{\left(  k\right)  }\right)  \in\mathbb{Z}^{m-1}$ we
only need to check whether for at least one $k\in N$ the vector $F^{\left(
k\right)  }$ satisfies
\begin{equation}
B^{\left(  k\right)  }\left(  -F^{\left(  k\right)  }\right)  \geq\overline
{A}_{k}. \label{Condition final}%
\end{equation}
If it does then $B^{\left(  k\right)  }\left(  -F^{\left(  k\right)  }\right)
$ extended by zero in the $m^{th}$ component is an integer image of $A$ and
$Ax\in\mathbb{Z}^{m}$ where%
\[
x=-\left(  f_{1},...,f_{k-1},0,f_{k+1},...,f_{n}\right)  ^{T}.
\]
If not then $\operatorname*{IIm}\left(  A\right)  =\emptyset.$

Checking the condition (\ref{Condition final}) is $O\left(  mn\right)  ,$
normalisation of $A$ with respect to the last row is also $O\left(  mn\right)
.$ In general this check needs to be done for all $k=1,...,n$ and so the
method is $O\left(  mn^{2}\right)  .$

\section{Conclusions}

We have shown in two special cases how to find an integer image of a matrix or
decide that none exists, where the matrix is obtained by adding one row to a
matrix for which the answer is known. It is self-suggesting an iterative
procedure for answering this question in a general case, however the
complexity issues remain to be solved.

\textbf{Acknowledgement}: This work was supported by the EPSRC grant EP/J00829X/1.

\end{document}